\newcommand{\ee}{\epsilon}
\newcommand{\R}{\mathbb{R}}
\DeclareMathOperator*{\dvg}{\mbox{div}}
\newtheorem{theorem}{Theorem}[section]
\newtheorem{proposition}{Proposition}[section]
\newtheorem{lemma}{Lemma}
\theoremstyle{remark}
\newtheorem{remark}{Remark}
\theoremstyle{definition}
\newtheorem{definition}{Definition}[section]
\begin{document}

\title{On a Toy-Model related to the Navier-Stokes Equations}

\author{F.~Hounkpe\footnote{Email address: \texttt{hounkpe@maths.ox.ac.uk}}
}
\affil{Mathematical Institute, University of Oxford, Oxford, UK}

\maketitle

\begin{abstract}
In this paper we consider a parabolic toy-model for the incompressible Navier-Stokes system. This model, as we shall see below, shares a lot of similar features with the incompressible model; among which the energy inequality, the scaling symmetry, and it is also supercritical in $3$D. Our goal is to establish some regularity results for this toy-model in order to get, if possible, better insight to the standard Navier-Stokes system. We also prove here, in a direct manner, a Caffarelli-Kohn-Nirenberg type result for our model. Finally, taking advantage of the absence of the divergence-free constraint, we are able to study this model in the radially symmetric setting for which we are able to establish full regularity.  
\end{abstract}
\setcounter{equation}{0}
\section{Introduction}
One of the fundamental questions in the mathematical hydrodynamics is the global well-posedness of the $3$D incompressible Navier-Stokes system, i.e. the global existence of a unique solution, for a given smooth divergence-free initial data. Despite the effort of many mathematicians, this question still remains unanswered. Nevertheless, a lot of progresses have been made, which allow us to better understand the regular or singular behaviour of this system.\\
We can enumerate many reasons why this problem is so difficult in $3$D. But, the most notable one is its supercriticality and we have a poor understanding of supercritical equations. And by supercritical, we mean that the globally controlled quantities available for the system (in the case of the incompressible Navier-Stokes system, those are the kinetic energy and the dissipation) are very weak or do not control at all the solution when we move down to smaller scales (or in other word when we zoom-in on the solution). Another major difficulty that should also be mentioned is the non-locality (characterised by the presence of the pressure) introduced by the incompressibility condition. In order to tackle the latter difficulty, one idea would be to find an approximate system to the Navier-Stokes equations which in a sense is completely local, study the regularity of solution(s) of this approximating system and hope to conserve this regularity (if there is) in the limit. This has partially been done by Rusin in \cite{Rus12}. He considered the following approximating model:
\[
\partial_t u - \Delta u - \frac{1}{\epsilon}\nabla \dvg u + u\cdot \nabla u + \frac{1}{2}u \dvg u = 0\quad (\mbox{$\epsilon > 0$}).
\]
He successfully proved convergence of a sequence of solution of the approximating system to a solution of the incompressible Navier-Stokes system but was unable to establish a complete regularity result for his approximating model. Our goal is to take a small step toward that realisation. We consider and hope to get a complete understanding of that system, where the bulk viscosity term "$\epsilon^{-1}\nabla \dvg u$" is removed:
\begin{equation}\label{Toy-NS1}
\partial_t u - \Delta u + u\cdot \nabla u + \frac{1}{2}u \dvg u = 0,
\end{equation}
with $u=(u_i)_{i=1}^n$ a vector field in $\R^n$.\\
It is quite straightforward to check that a smooth and rapidly decaying solution $u$ to system \eqref{Toy-NS1} has the following energy identity
\begin{equation}\label{I-E2}
    \frac{1}{2}\int_{\R^3}|u(x,t)|^2 dx + \int_0^t \int_{\R^3} |\nabla u(x,s)|^2dxds = \frac{1}{2}\int_{\R^3}|u_0(x)|^2 dx,
\end{equation}
which holds also true for the incompressible Navier-Stokes equations. Moreover, this toy-model has the same scaling symmetry as the incompressible Navier-Stokes system i.e. for any solution $u(x,t)$ to system \eqref{Toy-NS1}, we have that $u^{\lambda}(x,t) := \lambda u(\lambda x, \lambda^2 t)$ (for $\lambda >0$) is also solution to \eqref{Toy-NS1}; and this scaling symmetry is the only one we have for this system. With that in mind and going back to the energy identity \eqref{I-E2}, we get that our toy-model is (like the incompressible Navier-Stokes system) supercritical in dimension $3$. Indeed, we have:
\[ \int_{\R^3}|u^{\lambda}(x,T)|^2 dx, \int_0^T \int_{\R^3} |\nabla u^{\lambda}(x,s)|^2 dxds = O(1/{\lambda})\mbox{ as }\lambda \to 0. \]
We are not the first ones to study the regularity question for a toy-model to the Navier-Stokes equations. In fact, this question has been extensively examined (see for instance \cite{Tao16,Mont01,Gal09,Li08}). Among those works done on this subject, it's worth mentioning the one by Tao in \cite{Tao16} who successfully proved existence of a finite time blowup for a model that satisfies, unlike in the other papers, the energy identity \eqref{I-E2}. It is also worth mentioning that his system doesn't have the special structure of the nonlinearity in the Navier-Stokes system and therefore does not recover some of its fine properties, such as e.g. Caffarelli-Kohn-Nirenberg results (see \cite{Caff82}) and the backward uniqueness (see \cite{Esc03}) which, we are able to establish for our models. Therefore when it comes to gaining a better understanding of the regular and singular behaviour of the incompressible Navier-Stokes equations, our toy-model appears to be a suitable next step following the work of Tao.\\
As announced at the beginning of this paper, the goal here is to present some partial regularity results that might not occur in the incompressible model and discuss the radial symmetry case where one can completely answer the question of regularity for our model. Unfortunately, we are not yet able to fully answer that question in the general case. It's also worth mentioning that our partial regularity results hold true for a wider class of model; to be more precise, equations of the form
\[ \partial_t u - \Delta u + S(u,\nabla u) = 0,\]
where $S: \R^n\times \R^{n\times n} \to \R^n$ is bilinear and identity \eqref{I-E2} holds also true. For the radial symmetry case, an explicit knowledge of the structure of the non linearity is necessary (and this structure should be also adequate) in order to conclude, but the methodology to do so is similar to the one we present here.
\section{Preliminaries}
Before continuing our development, let us explain our notations
\[ z=(x,t), \quad z_0=(x_0,t_0),\quad B(x_0,R) = \{ |x-x_0|<R \};\]
\[ B_+(x_0,R) = \{ x\in B(x_0,R): x_{03}>0\};\]
\[ Q(z_0,R) = B(x_0,R)\times (t_0-R^2,t_0),\quad Q_+(z_0,R) = B_+(x_0,R)\times (t_0-R^2,t_0); \]
\[ B(r) = B(0,r),\quad Q(r) = Q(0,r),\quad B=B(1),\quad Q = Q(1); \]
\[ B_+(r) = B_+(0,r),\quad Q_+(r) = Q_+(0,r),\quad B_+=B_+(1),\quad Q_+ = Q_+(1) \]
For $\Omega$ an open subset of $\R^n$ and $-\infty\leq T_1<T_2 \leq +\infty$. Set $Q_{T_1,T_2}:= \Omega \times (T_1,T_2)$. We will be using
$L_{m,n}(Q_{T_1,T_2}):= L_n(T_1,T_2;L_m(\Omega))$, the Lebesgue space with the norm
\[
\|v\|_{m,n,Q_{T_1,T_2}} = \begin{cases}
\left( \int_{T_1}^{T_2}\|v(\cdot,t)\|^n_{L_m(\Omega)}dt\right)^{1/n},\quad & 1\leq n< \infty\\
\mbox{ess}\displaystyle\sup_{(T_1,T_2)}\|v(\cdot,t)\|_{L_m(\Omega)},\quad & n= \infty,
\end{cases}
\]
\[ L_m(Q_{T_1,T_2})=L_{m,m}(Q_{T_1,T_2}), \quad \|v\|_{m,m,Q_{T_1,T_2}} = \|v\|_{m,Q_{T_1,T_2}}; \]
$W^{1,0}_{m,n}(Q_{T_1,T_2})$, $W^{2,1}_{m,n}(Q_{T_1,T_2})$ are the Sobolev spaces with mixed norm,
\[ W^{1,0}_{m,n}(Q_{T_1,T_2}) = \left\{ v,\nabla v \in L_{m,n}(Q_{T_1,T_2}) \right\}, \]
\[ W^{2,1}_{m,n}(Q_{T_1,T_2}) = \left\{ v,\nabla v, \nabla^2 v, \partial_t v \in L_{m,n}(Q_{T_1,T_2}) \right\}, \]
\[ W^{1,0}_m(Q_{T_1,T_2}) = W^{1,0}_{m,m}(Q_{T_1,T_2}), \quad W^{2,1}_m(Q_{T_1,T_2}) = W^{2,1}_{m,m}(Q_{T_1,T_2}).\]
For various mean values of functions, we write
\begin{gather*}
    [f]_{x_0,R} := \frac{1}{|B(R)|}\int_{B(x_0,R)} f(x) dx,\quad (f)_{z_0,R} = \frac{1}{|Q(R)|}\int_{Q(z_0,R)}g(z)dz\\
    [f]_{,R} := [f]_{0,R},\quad (g)_{,R} = (g)_{0,R}.
\end{gather*}
Here $|\omega|$ and $|\Omega|$ stands for the $3$ and $4$-dimensional Lebesgue measure of the domains $\omega$ and $\Omega$ respectively. Lastly, we denote for simplicity, $f_{,i} := \partial_i f$ and summation over repeated indices running from $1$ to $3$ is adopted.\\
We give in what follows, the right notion of solutions needed for our work.
\begin{definition}\label{def2.1}
Let $\omega$ be a domain of $\R^3$. We say that $u$ is a \textit{suitable weak solution} to \eqref{Toy-NS1} in $\omega \times (T_1,T_2)$ if $u$ obeys the following conditions:
\begin{enumerate}
    \item $u\in L_{2,\infty}(\omega\times (T_1,T_2))\cap L_2(T_1,T_2;W_2^1(\omega))$
          and satisfies system \eqref{Toy-NS1} in the sense of distributions;
    \item The following local energy inequality holds
    \begin{multline*}
     \int_{\omega} \phi |u(x,t)|^2 dx + 2\int_{T_1}^t\int_{\omega} \phi|\nabla u|^2 dx ds \leq   \int_{T_1}^t\int_{\omega} |u|^2 (\partial_t \phi + \Delta \phi ) dx ds \\+ \int_{T_1}^t\int_{\omega} u\cdot  \nabla \phi |u|^2 dx ds, 
    \end{multline*}
    holds for a.e $t\in (T_1,T_2)$ and all non-negative functions $\phi \in C^{\infty}_0(\omega\times (T_1,\infty))$.
\end{enumerate}
\end{definition}
Like in the case of the standard Navier-Stokes system, we do not know whether or not every weak solution of our model (i.e. a solution that belongs to the energy class prescribed in the first point of the previous definition) verifies the above local energy inequality, and this is one of the motivation for considering such a subclass of weak solutions. Further reasons will be discussed below.\\
For the partial boundary regularity, the right notion of solutions in given by the following. For the sake of simplicity, we give the definition for the case of the canonical cylinder, but this can be trivially extended to more general cylinders. 
\begin{definition}\label{def2.2}
We say that the function $u$ is a \textit{boundary suitable weak solution} to \eqref{Toy-NS1} in $Q_+$ if:
\begin{enumerate}
    \item $u\in L_{2,\infty}(Q_+)\cap W^{1,0}_2(Q_+)$, and satisfies \eqref{Toy-NS1} in the sense of distributions in $Q_+$ with the following no-slip boundary condition 
    \[
     u|_{x_3 = 0} = 0;
    \]
    in the sense of the trace;
    \item The following local energy inequality holds true
    \begin{multline}
    \int_{B_+} \phi |u(x,t)|^2 dx + 2\int_{-1}^t\int_{B_+} \phi|\nabla u|^2 dx ds \leq \int_{-1}^t\int_{B_+} |u|^2 (\partial_t \phi + \Delta \phi ) dx ds \\+ \int_{-1}^t\int_{B_+} u\cdot \nabla \phi |u|^2 dx ds, 
    \end{multline}
    for a.e $t \in (-1,0)$ with $\phi \in C^{\infty}_0(B\times (-1,1))$.
\end{enumerate}

\end{definition}
As additional motivation to consider those suitable weak solutions, we mention the following. Firstly, among the energy solutions of our model (those are for our toy-model the equivalents of the Navier-Stokes system's weak Leray-Hopf solutions) there is at least one suitable weak solution (the construction of such solution can be done in the same as in the case of Navier-Stokes system). Secondly, similarly to the Navier-Stokes system, there are strong reasons to believe that smooth energy solutions to our model are to be sought among the suitable weak solutions. And thirdly, we are also able to connect the question of smoothness and uniqueness for our toy-model. These last two points will be discussed in more detail elsewhere.\\
We are now ready to state the main results of this paper.
\section{Main Results}
The first three main results of this paper are partial regularity results. By this we mean, assuming an extra control on a certain norm of the solution, we aim to derive further regularity properties. The class of quantities at the heart of this analysis are called \textit{scale invariant quantities}, which are quantities $F(u,r)$ (with $u$ a solution)
such that:
\[ F(u^{\lambda},1) = F(u,\lambda), \]
for all $\lambda>0$ and $u^{\lambda} = \lambda u(\lambda x, \lambda^2 t)$. Our first result in that direction states as follows.
\begin{theorem}[Higher space-time integrability for the gradient]\label{Thm3.1}
Let $u$ be a suitable weak solution to \eqref{Toy-NS1} in $Q$ such that 
\[ 
\mbox{ess}\sup_{-1< t < 0} \|u(\cdot,t)\|_{BMO^{-1}(B)} < \infty. 
\]
Then we have that 
\[ \int_{Q(1/2)}|\nabla u|^{2+\delta} < \infty,\]
with $\delta>0$. Here, $f\in BMO^{-1}(B; \R^3)$ shall be understood as there exists $F \in BMO(B; \R^{3\times 3})$ such that $f = \dvg F$. (We do not need anything more than this property in our proof but the interested reader may find more details about the space $BMO^{-1}$ in \cite{Koch01}) 
\end{theorem}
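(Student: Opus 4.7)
The plan is to derive a (parabolic) reverse Hölder inequality for $|\nabla u|$ inside $Q(1/2)$ and then invoke the parabolic Gehring lemma (as in Giaquinta--Struwe). I would start from the local energy inequality applied with $\phi=\eta^2$, for $\eta \in C_0^\infty$ a standard cutoff supported in $Q(z_0,2r)\subset Q$, equal to $1$ on $Q(z_0,r)$, with $|\nabla \eta|\leq C/r$ and $|\partial_t \eta|+|\Delta \eta|\leq C/r^2$. For any sub-cylinder $Q(z_0,2r)\subset Q$ this immediately gives a Caccioppoli-type bound
\[
\essup_{t_0-r^2 < t < t_0}\int_{B(x_0,r)}|u(\cdot,t)|^2\,dx + \int_{Q(z_0,r)}|\nabla u|^2\,dz \leq \frac{C}{r^2}\int_{Q(z_0,2r)}|u|^2\,dz + |I|,
\]
where $I := \int_{Q(z_0,2r)} u\cdot \nabla \phi\,|u|^2\,dz$ is the supercritical cubic term.

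The decisive step is to exploit the hypothesis by writing $u(\cdot,t) = \dvg F(\cdot,t)$ with $\essup_t\|F(\cdot,t)\|_{BMO(B)}\leq M$. Integration by parts in $x$ yields
\[
\int u_j\,\partial_j \phi\, |u|^2\,dx = -\int F_{ij}\,\partial_i\partial_j \phi\, |u|^2\,dx - 2\int F_{ij}\,\partial_j \phi\, u_k\,\partial_i u_k\,dx.
\]
A direct computation shows that replacing $F_{ij}$ by any time-dependent constant matrix makes the right-hand side vanish, so we may subtract $c_{ij}(t) := [F_{ij}(\cdot,t)]_{x_0,2r}$. The John--Nirenberg inequality then provides $\|F(\cdot,t)-c(t)\|_{L_p(B(x_0,2r))}\leq C_p r^{3/p} M$ for every $p<\infty$. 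Hölder in space and time together with Young's inequality let the $\nabla u$ factor be absorbed into the left-hand side of the Caccioppoli bound, producing an estimate for $|I|$ in terms of an $|u|$-average over $Q(z_0,2r)$ with exponent strictly less than $2$, with the correct scaling in $r$.

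Next, applying the spatial Sobolev--Poincaré inequality on each time slice (subtracting $[u(\cdot,t)]_{x_0,2r}$ where needed and absorbing the mean into the l.o.t.) converts the $|u|$-averages into $|\nabla u|$-averages with exponent $q \in [1,2)$, yielding the reverse Hölder inequality
\[
\frac{1}{|Q(r)|}\int_{Q(z_0,r)}|\nabla u|^2\,dz \leq C(M)\left(\frac{1}{|Q(2r)|}\int_{Q(z_0,2r)}|\nabla u|^q\,dz\right)^{2/q}
\]
uniformly in $z_0\in Q(1/2)$ and small $r>0$. The parabolic version of Gehring's lemma then delivers some $\delta>0$ such that $|\nabla u|^{2+\delta}\in L_1(Q(1/2))$.

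The main obstacle will be the handling of the cubic term: it lives exactly at the scaling-critical level of the system and cannot be controlled by the energy inequality alone. The two essential ingredients for closing the estimate are (i) the algebraic cancellation of the constant part of $F$, which is what lets us use $F-[F]$ instead of $F$, and (ii) the John--Nirenberg bound $\|F-[F]\|_{L_p}\lesssim r^{3/p}\|F\|_{BMO}$ with the favourable scaling in $r$. Together they convert the $BMO^{-1}$ hypothesis on $u$ into an absorption-type control on $I$ with enough room to spare for Gehring's lemma to apply.
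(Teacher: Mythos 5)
Your high-level strategy coincides with the paper's: derive a parabolic Caccioppoli/reverse-H\"older inequality using the $BMO^{-1}$ structure and conclude via Gehring's lemma, and your algebraic observation that replacing $F_{ij}$ by a constant matrix annihilates the right-hand side of the integration-by-parts identity is exactly the cancellation the paper exploits. However, there is a genuine gap in the part you parenthesize as ``subtracting $[u(\cdot,t)]_{x_0,2r}$ where needed and absorbing the mean into the l.o.t.''

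For a reverse-H\"older inequality of the form you target, with $|\nabla u|^q$ on the right, you must have $u$ minus a constant (so that Sobolev--Poincar\'e can be invoked) on the right side of the Caccioppoli bound \emph{already at the level of the energy inequality}. This cannot be done a posteriori: because of the nonlinearity, $u-c$ does not satisfy the same equation or local energy inequality as $u$, and because the only sensible constant here is time-dependent, substituting $u=\hat u + \bar u(t)$ into the local energy inequality generates a term $\int |\hat u + \bar u|^2\,\partial_t\phi$ which must be reconciled with the time derivative $\dot{\bar u}(t)$ read off from the PDE. The paper handles this by choosing the Giaquinta-type \emph{weighted} mean $u_{x_0,2R}(t)=\bigl(\int u\varphi^2\bigr)/\bigl(\int\varphi^2\bigr)$, then (i) establishing $\dot u_{x_0,2R}\in L_{3/2}(-1,0)$ by testing the equation against constants, and (ii) expanding the energy inequality for $\hat u=u-u_{x_0,2R}(t)$ and tracking a delicate cancellation between the contribution $A_0$ from the cubic term and the terms produced by $\dot u_{x_0,2R}$. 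These steps are the technical heart of the argument and are entirely absent from your plan; without them you only get $\int_{Q(r)}|\nabla u|^2 \lesssim r^{-2}\int_{Q(2r)}|u|^2 + \cdots$, which does not convert into a reverse-H\"older inequality for $|\nabla u|$.

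A smaller point: the reverse-H\"older inequality that actually results (after estimating $\mathrm{ess\,sup}_t\int|\hat u|^2$ by $\int_{Q(3R)}|\nabla\hat u|^2$) is of the split form
\[
\frac{1}{|Q(R)|}\int_{Q(z_0,R)}|\nabla u|^2\,dz \leq C\left(\frac{1}{|Q(6R)|}\int_{Q(z_0,6R)}|\nabla u|^2\,dz\right)^{1/2}\left(\frac{1}{|Q(2R)|}\int_{Q(z_0,2R)}|\nabla u|^r\,dz\right)^{1/r},
\]
rather than the single-power form you wrote; Gehring's lemma still applies in this version (cf.\ Giaquinta), but the derivation must target this form and cannot produce the stronger one.
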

This assumption $u\in L_{\infty}(-1,0;BMO^{-1}(B))$ arise naturally when one studies type I blow-ups for the Navier-Stokes equations. These connections will be presented elsewhere. We discuss the consequence of this result in the last section of this paper. It is also worth mentioning that there is no such higher integrability result for the $3$D incompressible Navier-Stokes system at this time, and this is mainly due to the presence of the pressure (see e.g. \cite{Ser09} where this problem was considered). 

Our next main result is a Caffarelli-Kohn-Nirenberg type theorem for our model. The proof can done following ideas developed for the case of the Navier-Stokes system (see e.g. \cite{Ser14,Lin98,Caff82}) but we present here a more direct approach following ideas from \cite{Ser03}. The advantage of this method is that it gives us an estimation of the smallness parameter (see proof of Proposition \ref{Prop5.1} and Remark \ref{V-R1} below).
\begin{theorem}[A Caffarelli-Kohn-Nirenberg type result]\label{Thm3.2}
There exists a positive constant $\epsilon$ such if for any suitable weak solution $u$ to \eqref{Toy-NS1} in $Q$, we have 
\begin{equation}
    \sup_{0<r<1} \frac{1}{r}\int_Q |\nabla u|^2 < \epsilon,
\end{equation}
then the map $z\mapsto u(z)$ is h\"older continuous in $\overline{Q(\frac{1}{2})}$. Moreover, there exist absolute positive constants $c_{k,l}$ ($k,l = 0,1,2,\ldots$) such that
\[ \max_{\overline{Q(1/2)}}|\partial^l_t\nabla^k u(z)| \leq c_{k,l} \]
\end{theorem}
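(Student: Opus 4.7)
The plan is to follow the direct CKN approach of Seregin \cite{Ser03}, which is especially clean here because the absence of pressure makes every local estimate genuinely local. The idea is to upgrade the hypothesised smallness of a scale-invariant gradient quantity to smallness of a scale-invariant $L^3$-quantity in $u$ itself, and then invoke the $\vree$-regularity criterion of the paper (Proposition \ref{Prop5.1}), which is formulated in that latter language.

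\textbf{Main estimates.} I would work with the usual dimensionless functionals
\[
A(z_0,r) := \mbox{ess}\sup_{t_0-r^2<t<t_0}\frac{1}{r}\int_{B(x_0,r)}|u|^2\,dx,\quad E(z_0,r) := \frac{1}{r}\int_{Q(z_0,r)}|\nabla u|^2\,dz,
\]
\[
C(z_0,r) := \frac{1}{r^2}\int_{Q(z_0,r)}|u|^3\,dz,
\]
each invariant under $u\mapsto u^\lambda$. Plugging a cutoff $\phi$ adapted to $Q(z_0,r/2)\subset Q(z_0,r)$ into the local energy inequality from Definition \ref{def2.1} would yield an estimate of the shape
\[
A(z_0,r/2)+E(z_0,r/2) \leq c\bigl(C(z_0,r)^{2/3}+C(z_0,r)\bigr),
\]
the first summand coming from the linear terms $|u|^2(\partial_t\phi+\Delta\phi)$ (via H\"older) and the second from the transport term $u\cdot\nabla\phi\,|u|^2$, which is the only nonlinear contribution because no pressure is present. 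In the opposite direction, parabolic Gagliardo--Nirenberg interpolation gives, for $0<\rho<r$,
\[
C(z_0,\rho) \leq c\bigl(\tfrac{\rho}{r}\bigr)^3 A(z_0,r)^{3/2} + c\bigl(\tfrac{r}{\rho}\bigr)^2 A(z_0,r)^{3/4}E(z_0,r)^{3/4},
\]
coupling $C$ back to $A$ and $E$ at a comparable scale. Together these two displays set up a dyadic iteration in the three scale-invariant quantities.

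\textbf{Conclusion and main obstacle.} Under the hypothesis $\sup_{0<r<1}E(0,r)<\vree$, a dyadic iteration of the two inequalities above, with $\vree$ chosen sufficiently small, propagates smallness of $C(z_0,r)+A(z_0,r)$ down every scale uniformly in $z_0\in\overline{Q(1/2)}$, eventually driving $C(z_0,r_*)$ below the threshold $\vree_0$ supplied by Proposition \ref{Prop5.1}. That proposition then delivers H\"older continuity of $u$ on $\overline{Q(1/2)}$. For the derivative bounds $|\partial_t^l\nabla^k u|\leq c_{k,l}$, a standard bootstrap suffices: once $u$ is locally bounded one reads \eqref{Toy-NS1} as a linear heat equation with right-hand side $-u\cdot\nabla u - \tfrac{1}{2}u\,\dvg u$ controlled in $L^p$ for every $p$, and iterated interior $L^p$ and Schauder estimates yield smoothness and the claimed uniform bounds. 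The hard part will be deriving the first energy estimate with absolute constants and no stray non-scale-invariant factors of $r$, so that the iteration closes with a single smallness threshold $\vree$ independent of the base scale; this bookkeeping is precisely what produces the quantitative value of $\vree$ highlighted by the author in Remark \ref{V-R1}.
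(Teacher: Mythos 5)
Your overall strategy is exactly that of the paper: introduce the scale-invariant quantities $A$, $E$, $C$ of \eqref{E3.39}, couple them by a Caccioppoli-type energy estimate and a Gagliardo--Nirenberg interpolation, iterate to drive $C(z_0,r_*)$ below the $\varepsilon$-regularity threshold of Proposition~\ref{Prop5.2}, and then bootstrap via the heat equation for the derivative bounds. The interpolation inequality you state is essentially Lemma~\ref{V-L5} (your exponent on $r/\rho$ in the second term is slightly off, but that is immaterial).

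The genuine gap is your first ``main estimate,'' $A(z_0,r/2)+E(z_0,r/2)\leq c\bigl(C(z_0,r)^{2/3}+C(z_0,r)\bigr)$. This is the naive bound you get by estimating $\int u\cdot\nabla\phi\,|u|^2$ directly by $\tfrac{c}{R}\int_{Q(R)}|u|^3=cR\,C(R)$, and it is arithmetically correct, but it is too weak to close the iteration. Plugging Lemma~\ref{V-L5} into the $C$ term and tracking $\mathcal{E}(r):=A^{3/2}(r)$ (or any fixed power of $A$), you inevitably produce a superlinear contribution of the type $\vartheta^{9/2}\mathcal{E}(\varrho)^{3/2}$ with no small prefactor and no absolute a priori bound on $\mathcal{E}(1)$; the hypothesis controls only $E$, not $A$, so this term cannot be absorbed, and the dyadic scheme does not self-improve. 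The paper avoids this by proving the sharper Lemma~\ref{V-L6}: one splits
\[
\int u\cdot\nabla\phi\,|u|^2 \;=\; \int u\cdot\nabla\phi\bigl(|u|^2-[|u|^2]_{,R}\bigr) \;+\; [|u|^2]_{,R}\int u\cdot\nabla\phi,
\]
integrates the second piece by parts to bring out $\dvg u$, and applies the Poincar\'e--Sobolev inequality to the first, yielding the nonlinear contribution $A^{1/2}(R)\,C^{1/3}(R)\,E^{1/2}(R)$ rather than $C(R)$. Under the hypothesis $E<\epsilon$, this carries the decisive factor $\epsilon^{1/2}$, so that after substituting the interpolation inequality the recursion in $\mathcal{E}$ is \emph{linear} with a contraction factor depending only on $\vartheta$, plus an additive $G(\epsilon)\to 0$; this is \eqref{V-E37}--\eqref{V-E38}. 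Without that refinement the argument stalls. You should replace your energy estimate by the Poincar\'e-refined one before attempting the iteration.

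A smaller point: you write that Proposition~\ref{Prop5.1} is ``formulated in $L^3$-language'' --- in fact it is stated in terms of $\int_Q|u|^{10/3}$, and it is its consequence, Proposition~\ref{Prop5.2}, that gives the $L^3$-criterion actually used in the proof of Theorem~\ref{Thm3.2}. The bootstrap for the higher derivatives and the concluding use of scaling to place the conclusion on $\overline{Q(1/2)}$ are correctly indicated.
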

Before stating the next main result let us emphasise that, unlike the case of the standard Navier-Stokes system, this Caffarelli-Kohn-Nirenberg type result gives us for our model smoothness in time; and this mainly due to the absence of pressure.\\
Our next main result is concerned with partial boundary regularity and states as follows.
\begin{theorem}\label{Thm3.3}
Let $u$ be a boundary suitable weak solution in $Q_+$ (See Definition \ref{def2.2}) such that $u \in L_{\infty}(Q_+)$. Then, we have 
\[ u \in C^{\infty}(\overline{Q_+(a)}), \]
with $0<a<1$.
\end{theorem}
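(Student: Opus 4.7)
The strategy is to regard \eqref{Toy-NS1} as a linear inhomogeneous heat equation with zero Dirichlet data on the flat piece of the boundary, and to bootstrap the regularity of $u$ by standard parabolic $L^p$ and Schauder estimates on a half-space. Writing
\[
\partial_t u - \Delta u = f,\qquad f := -u\cdot\nabla u - \tfrac{1}{2}u\,\dvg u,
\]
with $u|_{x_3=0}=0$, the hypothesis $u\in L_\infty(Q_+)$ combined with $\nabla u\in L_2(Q_+)$ (built into Definition \ref{def2.2}) immediately gives $f\in L_2(Q_+)$.

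First, using a space-time cutoff that localises away from the lateral and initial portions of $\partial Q_+$ while preserving the flat boundary piece $\{x_3=0\}$, the restriction of $u$ to a cylinder $Q_+(a')$ (with $a<a'<1$) may be viewed as a solution of the Dirichlet heat problem on a half-space with $L_2$ forcing. Boundary maximal $L^2$-regularity (see e.g. the Ladyzhenskaya-Solonnikov-Ural'tseva monograph) yields $u\in W^{2,1}_2(Q_+(a'))$. The parabolic Sobolev embedding then gives $\nabla u\in L_{10/3}(Q_+(a'))$ and hence $f\in L_{10/3}(Q_+(a'))$. A second application of the boundary $L^p$-estimate on a slightly smaller cylinder yields $u\in W^{2,1}_{10/3}$, and iterating on a shrinking family of cylinders produces $u\in W^{2,1}_p$ for every $p<\infty$. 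By parabolic Morrey embedding, $u$ and $\nabla u$ are then H\"older continuous on $\overline{Q_+(a_1)}$ for some $a<a_1<1$.

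Once $u$ and $\nabla u$ are H\"older continuous, so is $f$, and classical boundary Schauder estimates for the half-space Dirichlet heat equation upgrade $u$ to $C^{2+\alpha,1+\alpha/2}$. To go further I would differentiate \eqref{Toy-NS1} tangentially in $x_1$, $x_2$ and $t$: these tangential derivatives of $u$ still vanish on $\{x_3=0\}$ and satisfy linear parabolic equations with coefficients and forcing in the H\"older class already established, so a further Schauder estimate gains one order of regularity. Normal derivatives are recovered algebraically from the PDE via $\partial^2_{x_3}u = \partial_t u - \partial^2_{x_1}u - \partial^2_{x_2}u + f$, which preserves the H\"older class, and a standard induction then delivers $u\in C^\infty(\overline{Q_+(a)})$ for every $0<a<1$.

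The main obstacle is the boundary bookkeeping: at every iteration step one has to check that tangential derivatives of $u$ continue to vanish on $\{x_3=0\}$ (which they do, since $\partial_{x_1}$, $\partial_{x_2}$ and $\partial_t$ preserve the flat boundary) and that normal derivatives can be read off from the equation. Crucially, the absence of a pressure term removes the non-local boundary obstruction that appears in the incompressible Navier-Stokes setting, so every step of the bootstrap is purely local and no compatibility conditions beyond $u|_{x_3=0}=0$ need ever be imposed.
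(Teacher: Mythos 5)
Your bootstrap is essentially the paper's: treat \eqref{Toy-NS1} as a heat equation with forcing $f=-u\cdot\nabla u-\tfrac12 u\,\dvg u\in L_2$, localise, use $W^{2,1}_p$-estimates and parabolic embedding to push $p$ up, and then upgrade to H\"older and finally $C^\infty$ via Schauder. The one genuine difference is in how the flat boundary is handled. The paper avoids invoking half-space parabolic theory altogether: after cutting off, it extends the localised forcing $F_i$ (which vanishes on $\{x_3=0\}$, since $u$ does) by odd reflection to the full cylinder $Q$, solves the heat IBVP there, and uses uniqueness plus parity to identify the solution with $v_i=u_i\psi$ on $\{x_3>0\}$; this converts boundary regularity into interior regularity, so that at the Schauder stage one can simply differentiate the extended equation in every direction $\partial_j$ and apply interior estimates, without any tangential-versus-normal bookkeeping. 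You instead cite boundary maximal regularity and boundary Schauder directly on the half-space, differentiate only tangentially (and in $t$), and recover the normal derivatives algebraically from the PDE. Both routes are correct and standard; the reflection trick buys a cleaner induction (every derivative of the reflected solution is interior and no compatibility checks at $\{x_3=0\}$ ever arise), whereas your route is the textbook half-space argument and makes the role of the vanishing Dirichlet data more visible. It would be worth stating explicitly in your version, as you do informally at the end, that each tangentially differentiated quantity still vanishes on $\{x_3=0\}$ and that the forcing in the differentiated equation is of the form (Hölder coefficient)$\times$(already-controlled derivative of $u$), so that each Schauder step is legitimately linear with data in the right class.
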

It is worth mentioning that the previous high regularity (H\"older continuity of higher order derivatives) does not necessarily occur in the case of the 3D incompressible Navier-Stokes system for which a counter-example was constructed (see \cite{Ser10}).
Our last main result is about the case where our suitable weak solution is radially symmetric. Let us point our that the divergence-free condition prevents such situation to occur in the case of the incompressible Navier-Stokes system. We are able to prove regularity of such solution in that case for our model. This gives us, to the contrary of the previous two theorems, a geometric condition for obtaining regularity. It is also worth mentioning that a similar result was proved by \v{S}ver\'{a}k and co-author in \cite{Ple03} for the whole space and relied on the decay at infinity of the solution. Our proof, on the contrary is for the local setting and the ideas therein can easily be applied to more diverse cases e.g. axially symmetric case with radial dependence only (this will be presented elsewhere).
\begin{theorem}\label{Thm3.4}
Let $u$ be a suitable weak solution to \eqref{Toy-NS1}, which is moreover radially symmetric i.e. \[ u(x,t) = -v(|x|,t)x. \]
Then, we have that 
\[ u\in C^{\infty}(Q). \]
\end{theorem}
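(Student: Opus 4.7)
The plan is to substitute the radial ansatz $u(x,t) = -v(|x|,t)\,x$ into \eqref{Toy-NS1}. Writing $r = |x|$ and using the identities $\Delta(f(r)x_i) = (f'' + \tfrac{4}{r}f')x_i$, $u\cdot\nabla u = (rvv_r + v^2)x$ and $\dvg u = -(rv_r + 3v)$, every term in \eqref{Toy-NS1} becomes a scalar multiple of $x$; dividing out $x$ yields the scalar equation
\[
 v_t \;=\; v_{rr} + \tfrac{4}{r}v_r + \tfrac{3}{2}\,rv\,v_r + \tfrac{5}{2}\,v^2.
\]
Setting $w(y,t) := v(|y|,t)$ for $y \in \R^5$, the first two right-hand side terms are exactly $\Delta_{\R^5} w$ and, since $\dvg_y y = 5$, the whole system rewrites as the single semilinear heat equation
\[
 w_t - \Delta_{\R^5} w \;=\; \tfrac{3}{4}\dvg(y w^2) - \tfrac{5}{4}w^2 \qquad \text{on }\R^5,
\]
for which \eqref{I-E2} is precisely the $L^2(\R^5)$-energy identity. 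The problem has thus been reduced to the regularity of a scalar quadratic heat equation on $\R^5$, which sits above the Fujita exponent $p_F = \tfrac{7}{5}$.

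Next I would exploit rotational symmetry to localize the singular set. By Theorem~\ref{Thm3.2}, the (interior) singular set $\Sigma(u)\subset Q$ has parabolic $1$-dimensional Hausdorff measure zero, and radiality of $u$ forces $\Sigma$ to be $SO(3)$-invariant. Any singular point $(x_0,t_0)\in\Sigma$ with $x_0 \neq 0$ would carry the full $2$-sphere $\{|x|=|x_0|\}\times\{t_0\}$ into $\Sigma$, contradicting the Hausdorff bound. Hence $\Sigma \subset \{0\}\times(-1,0)$. Moreover, for any fixed $r_0 > 0$, the radial Sobolev (Strauss) inequality for $H^1$-functions on $\R^5$ gives the pointwise bound $|w(y,t)| \leq C|y|^{-2}\|w(\cdot,t)\|_{H^1(\R^5)}$, which translates into control of $|u|$ away from the origin; standard parabolic bootstrapping (applying Theorem~\ref{Thm3.2} in interior balls staying away from the axis) then yields $u \in C^\infty(\{|x|\geq r_0\}\times(-1,0))$ for every $r_0 > 0$.

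The remaining task is to exclude a singularity on the $t$-axis, which I expect to be the main obstacle. My approach is to work directly with the scalar equation on $\R^5$: a direct $L^p$-calculation (multiplying by $|w|^{p-2}w$ and using $\dvg y = 5$) yields
\[
 \tfrac{1}{p}\partial_t\|w\|_{L^p(\R^5)}^p + (p-1)\!\int\! |w|^{p-2}|\nabla w|^2\,dy \;=\; \tfrac{5(p-2)}{2(p+1)}\!\int\! |w|^p w\,dy,
\]
so that at the energy level $p=2$ the spreading drift and the $-\tfrac{5}{4}w^2$ damping cancel exactly, while at $p>2$ the right-hand side is controlled via a Gagliardo--Nirenberg interpolation against the dissipation and the $\R^5$ Sobolev embedding $H^1\hookrightarrow L^{10/3}$. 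Bootstrapping this iteration, in combination with the Fujita-supercriticality of the equation on $\R^5$, should propagate the $L^2$-energy control up to some $L^p$ with $p$ above the scaling-critical exponent $\tfrac{7}{2}$, after which standard parabolic regularity upgrades $w$ to $L^{\infty}_{\mathrm{loc}}$. Once $w$, and therefore $u$, is locally bounded near the origin, a final application of Theorem~\ref{Thm3.2} together with the off-axis smoothness from the previous paragraph gives $u \in C^\infty(Q)$. The delicate technical point is closing the $L^p$-bootstrap: since the energy is supercritical for the original $\R^3$ problem, the radial lift and the precise algebraic coefficients of the equation have to conspire exactly right to bring one just across the scaling-critical threshold on $\R^5$.
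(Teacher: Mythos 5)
Your derivation of the radial ODE, the identification of the lifted equation on $\R^5$, and the localization of the singular set to $\{0\}\times(-1,0)$ via Theorem~\ref{Thm3.2} and $SO(3)$-invariance all match the paper. The decisive step --- ruling out a singularity on the axis --- is where your proposal diverges, and it is where the gap lies.

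Your $L^p$-bootstrap does not close, and the obstruction is exactly the supercriticality you flagged at the end. The profile scales as $v^\lambda(r,t)=\lambda^2 v(\lambda r,\lambda^2 t)$, so for $w(y,t)=v(|y|,t)$ on $\R^5$ one has $\|w^\lambda\|_{L^p(\R^5)}^p=\lambda^{2p-5}\|w\|_{L^p(\R^5)}^p$: the scaling-critical space exponent is $p=5/2$, not something the lift slides under. (Your $7/2$ is the space-time critical exponent; the discrepancy is immaterial here.) Concretely, setting $\phi=|w|^{p/2}$, the dissipation in your identity is $\tfrac{4(p-1)}{p^2}\int|\nabla\phi|^2$ and the nonlinear term is $\int\phi^{2(p+1)/p}$; Gagliardo--Nirenberg on $\R^5$ (interpolating $\|\phi\|_2$ against $\|\phi\|_{10/3}\lesssim\|\nabla\phi\|_2$) produces the exponent $\|\nabla\phi\|_2^{5/p}$, so Young's inequality absorbs into the dissipation only for $p\geq 5/2$. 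At the starting level $p=2$ the relevant power is $5/2>2$; the vanishing prefactor $\tfrac{5(p-2)}{2(p+1)}$ kills the right-hand side only at $p=2$ exactly and does not repair the exponent mismatch for $p$ slightly above $2$. There is no first step to take, and the ``precise algebraic coefficients'' do not in fact conspire: the $L^2$ energy on $\R^5$ is precisely as supercritical as it was on $\R^3$. A secondary issue is that the whole $L^p$-on-$\R^5$ computation is written for a global solution, while $u$ is only a suitable weak solution on $Q$; localizing it with cutoffs introduces boundary terms you have not controlled. Finally, your final invocation of Theorem~\ref{Thm3.2} is a non sequitur: $u\in L^\infty_{\mathrm{loc}}$ does not by itself yield smallness of $\tfrac{1}{r}\int_{Q(z_0,r)}|\nabla u|^2$; one needs a Serrin-type local criterion, not the CKN $\varepsilon$-regularity.

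The paper's mechanism is genuinely different: it is pointwise, not integral. After the rescaling that places a putative singularity at the origin, the author works directly with the radial equation $v_t=v_{rr}+\tfrac{4}{r}v_r+\tfrac{3}{2}r v v_r+\tfrac{5}{2}v^2$ and sets $v^{(1)}=r^{5/3}v$, for which
\[
v^{(1)}_t-v^{(1)}_{rr}-\Bigl(\tfrac{4}{3r}+\tfrac{3}{2r^{2/3}}v^{(1)}\Bigr)v^{(1)}_r+\tfrac{20}{9r^2}v^{(1)}=0.
\]
The zeroth-order coefficient $\tfrac{20}{9r^2}$ is nonnegative, so the weak maximum principle applies on $(\varepsilon,1/2)\times(-1/4,-a^2)$ (where $u$ is already smooth); letting $\varepsilon\to 0$ and using $v^{(1)}(0,t)=0$ together with the a priori boundedness of $u$ away from the space-time origin produces a constant, uniform in $a$, bounding $|v^{(1)}|$ --- i.e.\ $\sup_{\overline{Q(1/2)}}|x|^{2/3}|u|<\infty$. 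That gives $u\in L_{3,\infty}$ locally, and the paper then quotes a backward-uniqueness / unique-continuation argument to conclude regularity. The weight $r^{5/3}$ is chosen precisely so that the transformed zeroth-order term has a favorable sign; this exploitation of the pointwise sign structure of the nonlinearity is what your $L^p$ energy scheme has no access to, and it is the content of the proof.
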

In the following sections, we give the proof of the theorems stated above.
\section{Proof of Theorem~\ref{Thm3.1}}
\paragraph{Step I.} We begin by establishing the following Caccioppoli's type inequality. To formulate it, we need to introduce additional notations. Fix non negative cut-off functions $\varphi \in C^{\infty}_0(B(2))$ such that  $\varphi \equiv 1$ in $B$ and $\chi$ with the following properties:
\[
\chi(t) = \begin{cases}
0 \mbox{ for } t\leq -4,\\
(t + 4)/3 \mbox{ for } - 4 < t\leq-1,\\
1 \mbox{ for }t > -1.
\end{cases}
\]
Now, for an arbitrary point $z_0 = (x_0,t_0)$ such that $Q(z_0,2R) \subset Q$, we set 
\[ 
\chi_{t_0,2R}(t) = \chi((t-t_0)/R^2), \quad \varphi_{x_0,2R}(x) = \varphi((x-x_0)/R),
\]
and, as in \cite{Giaq82}, we introduce the special mean value
\[
u_{x_0,2R}(t) = \int_{B(x_0,2R)}u(x,t) \varphi^2_{x_0,2R}(x) dx \left( \int_{B(x_0,2R)} \varphi^2_{x_0,2R}(x)dx \right)^{-1}.
\]
Set  $\hat{u}_{x_0,2R}(x,t):= u(x,t) - u_{x_0,2R}(t)$ and introduce the matrix valued function $F = (F_{ij}) \in L_{\infty}(-1,0;BMO(B;\R^{3\times 3}))$ which is such that $u = \dvg F$.\\
Our Caccioppoli's inequality reads as follows
\begin{multline}\label{IV-E5}
    \int_{B(x_0,2R)} |\hat{u}_{x_0,2R}(x,t_0')|^2 \varphi^2_{x_0,2R}(x)\chi_{t_0,2R}^2(t_0') dx + 2\int_{-1}^{t_0'}\int_{B(x_0,2R)} \chi_{t_0,2R}^2 \varphi_{x_0,2R}^2|\nabla \hat{u}_{x_0,2R}|^2 dz\\ \leq \int_{-1}^{t_0'}\int_{B(x_0,2R)} |\hat{u}_{x_0,2R}|^2(\varphi_{x_0,2R}^2\partial_t \chi_{t_0,2R}^2 + \chi_{t_0,2R}^2 \Delta \varphi_{x_0,2R}^2)dz \\ -\int_{-1}^{t_0'}\int_{B(x_0,2R)}(F_{ik}-[F_{ik}]_{x_0,2R})(\varphi_{x_0,2R}^2)_{,ik} |\hat{u}_{x_0,2R}|^2 \chi_{t_0,2R}^2 dz\\ - 2 \int_{-1}^{t_0'}\int_{B(x_0,2R)}(F_{ik}-[F_{ik}]_{x_0,2R}) \chi_{t_0,2R}^2 (\varphi_{x_0,2R}^2)_{,i} \hat{u}^j_{x_0,2R}(\hat{u}^j_{x_0,2R})_k dz\\ - \int_{-1}^{t_0'} \chi_{t_0,2R}^2 u_{x_0,2R}\cdot\left( \int_{B(x_0,2R)} \hat{u}_{x_0,2R}\varphi_{x_0,2R}^2 \dvg \hat{u}_{x_0,2R} dx \right)dt, 
\end{multline}
for almost every $t_0' \in (-t_0 - (2R)^2,t_0)$, for all $z_0 = (x_0,t_0)\in Q$ and all $R>0$ satisfying the additional condition $Q(z_0,2R) \subset Q$.\\  
We will need some information on the regularity of $u_{x_0,2R}$ in the proof of \eqref{IV-E5}. What we can show is that 
\begin{equation}\label{IV-E6}
    \dot{u}_{x_0,2R}(:=\frac{d}{dt}u_{x_0,2R}) \in L_{\frac{3}{2}}(-1,0),
\end{equation}
and that's all we actually need to make our computations rigorous. To see this, we take as test function in \eqref{Toy-NS1},
\[
w^j_i (x,t) = \delta_{ij} \varphi^2_{x_0,2R}(x)\eta(t),
\]
where $\delta_{ij}$ is the Kronecker symbol and $\eta$ is an arbitrary function in $C^{\infty}_0(-1,0)$. As a result, we get that
\begin{multline}\label{IV-E7}
    \dot{u}^i_{x_0,2R}(t) = -\left(\int_{B(x_0,2R)} \nabla u_i \cdot \nabla \varphi^2_{x_0,2R}(x)dx + \int_{B(x_0,2R)}u\cdot \nabla u_i \varphi^2_{x_0,2R}(x)dx\right.\\ \left. + \frac{1}{2}\int_{B(x_0,2R)} u_i \varphi^2_{x_0,2R}(x) \dvg u dx\right) \left( \int_{B(x_0,2R)}\varphi^2_{x_0,2R}(x) dx \right)^{-1}, 
\end{multline}
which readily gives \eqref{IV-E6}.\\
Next, we replace $u(x,t)$, in his local energy inequality, by $\hat{u}_{x_0,2R}(x,t)+u_{x_0,2R}(t)$ and take as test function $\phi = \chi_{t_0,2R}^2 \varphi_{x_0,2R}^2$. Then, the terms that do not contain any spatial derivatives can be transform as follows
\begin{multline*}
    \int_{B(x_0,2R)}|\hat{u}_{x_0,2R}(x,t_0')+u_{x_0,2R}(t_0')|^2 \chi_{t_0,2R}^2(t_0') \varphi_{x_0,2R}^2(x) dx \\=  \int_{B(x_0,2R)}|\hat{u}_{x_0,2R}(x,t_0')|^2 \chi_{t_0,2R}^2(t_0') \varphi_{x_0,2R}^2(x) dx + \chi_{t_0,2R}^2(t_0')|u_{x_0,2R}(t_0')|^2\int_{B(x_0,2R)}\varphi_{x_0,2R}^2(x) dx,
\end{multline*}
and
\begin{multline*}
    \int_{-1}^{t_0'}\int_{B(x_0,2R)}|\hat{u}_{x_0,2R}+u_{x_0,2R}|^2 \partial_t\chi_{t_0,2R}^2 \varphi_{x_0,2R}^2 dz = \int_{-1}^{t_0'}\int_{B(x_0,2R)}|\hat{u}_{x_0,2R}|^2 \partial_t\chi_{t_0,2R}^2 \varphi_{x_0,2R}^2 dz\\ + \int_{B(x_0,2R)}\varphi_{x_0,2R}^2(x) dx \left( \chi_{t_0,2R}^2(t_0')|u_{x_0,2R}(t_0')|^2 - 2 \int_{-1}^{t_0'} u_{x_0,2R}(t)\cdot\dot{u}_{x_0,2R}(t)dt  \right).
\end{multline*}
Notice that 
\begin{multline*}
    \int_{-1}^{t_0'}\int_{B(x_0,2R)}|\hat{u}_{x_0,2R}+u_{x_0,2R}|^2 \chi_{t_0,2R}^2 \Delta\varphi_{x_0,2R}^2 dz = \int_{-1}^{t_0'}\int_{B(x_0,2R)}|\hat{u}_{x_0,2R}|^2 \chi_{t_0,2R}^2 \Delta\varphi_{x_0,2R}^2 dz\\ - 2 \int_{-1} \chi_{t_0,2R}^2 u_{x_0,2R} \cdot \left(\int_{B(x_0,R)} \nabla\hat{u}_{x_0,2R}\nabla \varphi_{x_0,2R}^2dx \right)dt.
\end{multline*}
Taking into account the previous expansions and \eqref{IV-E7}, the local energy becomes
\begin{multline}\label{IV-E8}
    \int_{B(x_0,2R)} |\hat{u}_{x_0,2R}(x,t_0')|^2 \varphi^2_{x_0,2R}(x)\chi_{t_0,2R}^2(t_0') dx + \int_{-1}^{t_0'}\int_{B(x_0,2R)} \chi_{t_0,2R}^2 \varphi_{x_0,2R}^2|\nabla \hat{u}_{x_0,2R}|^2 dz\\ \leq \int_{-1}^{t_0'}\int_{B(x_0,2R)} |\hat{u}_{x_0,2R}|^2(\varphi_{x_0,2R}^2\partial_t \chi_{t_0,2R}^2 + \chi_{t_0,2R}^2 \Delta \varphi_{x_0,2R}^2)dz + \int_{-1}^{t_0'}\int_{B(x_0,2R)} u\cdot \nabla \varphi_{x_0,2R}^2 |u|^2 \chi_{t_0,2R}^2 dz\\
    + 2 \int_{-1}^{t_0'}\chi_{t_0,2R}^2 u_{x_0,2R} \cdot \left( u\cdot \nabla u \varphi^2_{x_0,2R} + \frac{1}{2} u \varphi_{x_0,2R}^2 \dvg u dx\right)dt.
\end{multline}
All that is left is to transform the last two terms in \eqref{IV-E8} and notice that some cancellations occur when putting them together. We have
\begin{multline*}
    \int_{-1}^{t_0'}\int_{B(x_0,2R)} u\cdot \nabla \varphi_{x_0,2R}^2 |u|^2 \chi_{t_0,2R}^2 dz = A_0 + \int_{-1}^{t_0'}\int_{B(x_0,2R)} u\cdot \nabla \varphi^2_{x_0,2R} |\hat{u}_{x_0,2R}|^2 \chi^2_{t_0,2R}dx,
\end{multline*}
with
\[
A_0 = \int_{-1}^{t_0'} \chi^2_{t_0,2R} |u_{x_0,2R}|^2 \int_{B(x_0,2R)} u\cdot \nabla \varphi^2_{x_0,2R} dx dt + 2 \int_{-1} \chi^2_{t_0,2R}u_{x_0,2R} \cdot \left( u\cdot \nabla \varphi^2_{x_0,2R} \hat{u}_{x_0,2R}\right)dt. 
\]
Using the fact that $u = \dvg F$ and by integrating by part, we get
\begin{multline*}
    \int_{-1}^{t_0'}\int_{B(x_0,2R)} u\cdot \nabla \varphi^2_{x_0,2R} |\hat{u}_{x_0,2R}|^2 \chi^2_{t_0,2R}dx\\ = -\int_{-1}^{t_0'}\int_{B(x_0,2R)}(F_{ik}-[F_{ik}]_{x_0,2R})(\varphi_{x_0,2R}^2)_{,ik} |\hat{u}_{x_0,2R}|^2 \chi_{t_0,2R}^2 dz\\ - 2 \int_{-1}^{t_0'}\int_{B(x_0,2R)}(F_{ik}-[F_{ik}]_{x_0,2R}) \chi_{t_0,2R}^2 (\varphi_{x_0,2R}^2)_{,i} \hat{u}^j_{x_0,2R}(\hat{u}^j_{x_0,2R})_k dz.
\end{multline*}
Now, performing on more integration by part in the last term of \eqref{IV-E8}, we obtain that
\begin{align*}
    2 \int_{-1}^{t_0'}\chi_{t_0,2R}^2 u_{x_0,2R} \cdot \left( u\cdot \nabla u \varphi^2_{x_0,2R} + \frac{1}{2} u \varphi_{x_0,2R}^2 \dvg u dx\right)dt = -A_0\\ - \int_{-1}^{t_0'} \chi_{t_0,2R}^2 u_{x_0,2R}\cdot\left( \int_{B(x_0,2R)} \hat{u}_{x_0,2R}\varphi_{x_0,2R}^2 \dvg \hat{u}_{x_0,2R} dx \right)dt,
\end{align*}
which conclude the proof of \eqref{IV-E5} after putting all the terms together.
\paragraph{Step II.} We derive now a reverse holder inequality using the Caccioppoli's inequality established previously. Using simple known arguments, we get from \eqref{IV-E5} the following estimate
\begin{align*}
\phantom{{}\leq{}}
    I &:= \int_B |\hat{u}_{x_0,2R}(x,t_0)|^2 \varphi^2_{x_0,2R}(x)dx + 2 \int_{-1}^{t_0}\int_B\chi^2_{x_0,2R} \varphi^2_{x_0,2R}|\nabla \hat{u}_{x_0,2R}|^2 dz\\
    &\leq c \left( \frac{1}{R^2} \int_{Q(z_0,2R)}|\hat{u}_{x_0,2R}|^2 dz + \frac{1}{R^2}\int_{Q(z_0,2R)}|F-[F]_{x_0,2R}||\hat{u}_{x_0,2R}|^2 dz \right.\\&\mathrel{\phantom{=}}\left. \frac{1}{R}\int_{Q(z_0,2R)}|F-[F]_{x_0,2R}|(|\nabla \hat{u}_{x_0,2R}| \varphi_{x_0,2R}\chi_{t_0,2R})|\hat{u}_{x_0,2R}|dz\right.\\&\mathrel{\phantom{=}}\left.\int_{-1}^{t_0}|u_{x_0,2R}(t)|\int_B(|\nabla \hat{u}_{x_0,2R}| \varphi_{x_0,2R}\chi_{t_0,2R})|\hat{u}_{x_0,2R}|dz \right)\\
    &=: I_1 + I_2 + I_3 + I_4.
\end{align*}
Next, we estimate the $I_i$'s. For this we introduce $s\in (1,2)$ and obtain the following
\begin{align*}
    I_2 &\leq \int_{t_0 - (2R)^2}^{t_0} \left( \int_{B(x_0,2R)}|\hat{u}_{x_0,2R}|^{\frac{2 s}{2-s}} dx \right)^{\frac{2-s}{s}}\left( \int_{B(x_0,2R)}|F-[F]_{x_0,2R}|^{\frac{s}{2s-2}}dx\right)^{\frac{2s-2}{s}}dt\\
    &\leq C R^{2(\frac{3}{s'}-1)} \int_{t_0-(2R)^2}^{t_0}\left( \int_{B(x_0,2R)}|\hat{u}_{x_0,2R}|^{\frac{2 s}{2-s}} dx\right)^{\frac{2-s}{s}}dt,
\end{align*}
with $C = C(s,\Gamma)>0$ (where we set for simplicity $\Gamma:= \mbox{ess}\sup_{-1<t<0}\|F(\cdot,t)\|_{BMO(B)}$) and as usual $s' = s/(s-1)$. Similarly, we have 
\begin{align*}
    I_3 &\leq \frac{C(s)}{R}R^{\frac{3}{s'}} \mbox{ess}\sup_{t_0 - (2R)^2 < t < t_0} \sup_{B(x_0,2R) \subset B} \left( \frac{1}{|B(2R)|} \int_{B(x_0,2R)} |F-[F]_{x_0,2R}|^{s'}dx\right)^{\frac{1}{s'}}\\&\mathrel{\phantom{=}}\times \int_{t_0 - (2R)^2}^{t_0}\left( \int_{B(x_0,2R)} |\nabla \hat{u}_{x_0,2R}|^2 \varphi^2_{x_0,2R} \chi^2_{t_0,2R}dx\right)^{\frac{1}{2}}\left(\int_{B(x_0,2R)}|\hat{u}_{x_0,2R}|^{\frac{2s}{2-s}}dx\right)^{\frac{2-s}{2s}}dt\\
    &\leq C R^{\frac{3}{s'}-1} \left( \int_{Q(z_0,2R)}|\nabla \hat{u}_{x_0,2R}|^2 \varphi^2_{x_0,2R} \chi^2_{t_0,2R}dz\right)^{\frac{1}{2}} \left( \int_{t_0 - (2R)^2}^{t_0}\left(\int_{B(x_0,2R)}|\hat{u}_{x_0,2R}|^{\frac{2s}{2-s}}dx \right)^{\frac{2-s}{s}}dt \right)^{\frac{1}{2}},
\end{align*}
with $C = C(s,\Gamma)>0$. Next, notice that $u_{x_0,2R}(t)$ can be rewritten as follows
\[
u_{x_0,2R}(t) = -\frac{1}{R^3 \int_{B(2)}\varphi^2(x)dx} \int_{B(x_0,2R)}(F_{ik} - [F_{ik}]_{x_0,2R})(\varphi^2_{x_0,2R})_{,k} dx,
\]
thus,
\[ |u_{x_0,2R}(t)| \leq \frac{C}{R},\]
for all $t \in [-1,0]$ and $C= C(s,\varphi,\Gamma)>0$. We get, as before
\[
I_4 \leq C R^{\frac{3}{s'}-1} \left( \int_{Q(z_0,2R)}|\nabla \hat{u}_{x_0,2R}|^2 \varphi^2_{x_0,2R} \chi^2_{t_0,2R}dz\right)^{\frac{1}{2}} \left( \int_{t_0 - (2R)^2}^{t_0}\left(\int_{B(x_0,2R)}|\hat{u}_{x_0,2R}|^{\frac{2s}{2-s}}dx \right)^{\frac{2-s}{s}}dt \right)^{\frac{1}{2}},
\]
with $C = C(s,\varphi,\Gamma)>0$.\\
Summarising our efforts, we have
\begin{multline}\label{IV-E9}
    \int_B |\hat{u}_{x_0,2R}(x,t_0)|^2 \varphi^2_{x_0,2R}(x)dx + \int_{-1}^{t_0}\int_B\chi^2_{x_0,2R} \varphi^2_{x_0,2R}|\nabla \hat{u}_{x_0,2R}|^2 dz\\ \leq C(s,\varphi,\Gamma) R^{2(\frac{3}{s'}-1)} \int_{t_0-(2R)^2}^{t_0}\left( \int_{B(x_0,2R)}|\hat{u}_{x_0,2R}|^{\frac{2 s}{2-s}} dx\right)^{\frac{2-s}{s}}dt.
\end{multline}
It's worth mentioning that a careful analysis of the constant in the previous inequality shows the following dependency in $\Gamma$
\[ C = c(s,\varphi)(1 + \Gamma^2). \]
From this point the rest of the proof follows line by line the proof of a similar result in \cite{Ser12_2}. We, nonetheless, present the proof here, for the sake of completeness. Upon assuming $s \in (1,3/2)$, on can find without difficulty numbers $0<\lambda<1$, $0<\mu<1$ and $1<r<2$ such that 
\begin{gather*}
    \frac{2s}{2-s} = 2 \lambda + \frac{3 r}{3 - r} \mu,\\
    \lambda + \mu = 1,\\
    \frac{3 r}{3 - r}\mu \frac{2 - s}{s} = 1.
\end{gather*}
Using these numbers, we derive from \eqref{IV-E9}
\begin{align*}
    \int_{Q(z_0,R)}|\nabla \hat{u}_{x_0,2R}|^2 dz &\leq C R^{2(\frac{3}{s'}-1)}\int_{t_0-(2R)^2}^{t_0}\left( \int_{B(x_0,2R)}|\hat{u}_{x_0,2R}|^{2 \lambda + \frac{3r}{3-r}\mu} dx\right)^{\frac{2-s}{s}}dt\\
    &\leq C R^{2(\frac{3}{s'}-1)}\int_{t_0-(2R)^2}^{t_0} \left( \int_{B(x_0,2R)}|\hat{u}_{x_0,2R}|^2 dx \right)^{\frac{2-s}{s}\lambda}\left( \int_{B(x_0,2R)}|\hat{u}_{x_0,2R}|^{\frac{3r}{3-r}}dx \right)^{\frac{2-s}{s}\mu} dt.
\end{align*}
The last term can be estimated with the help of Sobolev's inequality
\begin{multline}\label{IV-E10}
    \int_{Q(z_0,R)}|\nabla \hat{u}_{x_0,2R}|^2 dz\leq C R^{2(\frac{3}{s'}-1)}\mbox{ess}\sup_{t_0 - (2R)^2< t<t_0} \left(\int_{B(x_0,2R)}|\hat{u}_{x_0,2R}(x,t)|^2 dx\right)^{\frac{1}{2}}\\
    \times R^{2\frac{r-1}{r}}\left( \int_{Q(z_0,2R)}|\nabla \hat{u}_{x_0,2R}|^r  \right)^{\frac{1}{r}},
\end{multline}
with $C = C(s,\varphi,\Gamma)>0$. To estimate the first multiplier on the right-hand side of the last inequality, we proceed as follows. By Poincare-Sobolev inequality, we have that 
\begin{equation}
    \left(\int_{B(x_0,2R)}|\hat{u}_{x_0,2R}|^{\frac{2 s}{2-s}} dx\right)^{\frac{2-s}{s}} \leq c(s) R^{3\frac{2-s}{s}-1}.
\end{equation}
Combining this with \eqref{IV-E9}, we get that 
\[
\int_B |\hat{u}_{x_0,2R}(x,t_0)|^2 \varphi^2_{x_0,2R}(x)dx \leq C \int_{Q(z_0,2R)}|\nabla \hat{u}_{x_0,2R}|^2 dx.
\]
Assuming now that $Q(z_0, 3R)\subset Q$, we have the following estimate
\begin{equation}\label{IV-E12}
    \mbox{ess}\sup_{t_0 - (2R)^2< t<t_0} \int_{B(x_0,2R)}|\hat{u}_{x_0,2R}(x,t)|^2 dx \leq C \int_{Q(z_0,3R)}|\nabla \hat{u}_{x_0,2R}|^2 dz,
\end{equation}
where $C = C(s,\varphi,\Gamma)>0$. We are now ready to estimate the fist multiplier on the right-hand side of \eqref{IV-E10}. We apply \eqref{IV-E12} in the following way
\begin{align*}
    \int_{B(x_0,2R)}|u(x,t) - u_{x_0,2R}(t)|^2 dx &\leq c \int_{B(x_0,2R)}|u(x,t) - u_{x_0,4R}(t)|^2 dx\\
    &\leq C \int_{Q(z_0,3R)}|\nabla \hat{u}_{x_0,2R}|^2 dz,
\end{align*}
for almost every $t \in (t_0 - (2 R)^2,t_0)$. Finally, \eqref{IV-E10} becomes
\begin{align*}
    \frac{1}{|Q(R)|}\int_{Q(z_0,R)}|\nabla u|^2 dz \leq C \left(\frac{1}{|Q(6R)|}\int_{Q(z_0,6R)}|\nabla u|^2 dz\right)^{\frac{1}{2}}\\
    \mathrel{\phantom{=}}\times \left(\frac{1}{|Q(2R)|}\int_{Q(z_0,2R)}|\nabla u|^r dz\right)^{\frac{1}{r}},
\end{align*}
which holds for some $r\in (1,2)$ and any $Q(z_0,6R)\subset Q$. And as before, $C = C(s,\varphi,\Gamma)>0$. This leads to (see \cite{Giaq82}) the existence of $p>2$ such that $\nabla u \in L_p(Q(R))$, for any $R\in (0,1)$. Moreover, the following estimate is valid
\begin{equation}
    \left(\frac{1}{|Q(R)|}\int_{Q(z_0,R)}|\nabla u|^p dz\right)^{\frac{1}{p}} \leq C \left(\frac{1}{|Q(6R)|}\int_{Q(z_0,6R)}|\nabla u|^2 dz\right)^{\frac{1}{2}},
\end{equation}
for all $Q(z_0,6R)\subset Q$ with $6R<$dist$(x_0,\partial B)$ and $t_0 - (6R)^2>-1$. Moreover, the constant $C>0$ depends only on $\Gamma$. This concludes the proof of Theorem \ref{Thm3.1}.
\section{Proof of Theorem~\ref{Thm3.2}}
We start by proving the following proposition.
\begin{proposition}\label{Prop5.1}
There exists an absolute positive constant $\epsilon_0$ with the following property. Assume that $u$ is suitable weak solution to \eqref{Toy-NS1} in $Q\equiv B\times (-1,0)$ and satisfies the condition
\begin{equation}\label{V-E14}
    \int_{Q} |u|^{\frac{10}{3}} dz < \epsilon_0.
\end{equation}
Then we have that $u$ is H\"older continuous in $\overline{Q(\varrho)}$ with $0<\varrho<1$.
\end{proposition}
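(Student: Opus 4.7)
The proposition is the $\epsilon$-regularity core needed for Theorem~\ref{Thm3.2}: uniform smallness of a scale-invariant $L^{10/3}$ quantity must propagate to H\"older continuity. My plan is a Caffarelli-Kohn-Nirenberg-style iteration driven by the local energy inequality of Definition~\ref{def2.1}, along the lines of the direct approach of \cite{Ser03}. A crucial simplification compared with the incompressible Navier-Stokes system is that the absence of a pressure removes any non-local cubic contribution from the local energy inequality.

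First, I would introduce the standard scale-invariant companions, at every interior point $z_0 \in \overline{Q(\varrho)}$ and all scales $r$ with $Q(z_0, r) \subset Q$:
\[
A(z_0, r) = \underset{t_0 - r^2 < t < t_0}{\mathrm{ess\,sup}}\, \frac{1}{r}\int_{B(x_0,r)} |u|^2\,dx,\qquad
E(z_0, r) = \frac{1}{r}\int_{Q(z_0,r)} |\nabla u|^2\,dz,
\]
\[
C(z_0, r) = \frac{1}{r^2}\int_{Q(z_0,r)} |u|^3\,dz.
\]
By H\"older's inequality $C(z_0, r) \leq c\, r^{-1/2}\bigl(\int_{Q(z_0,r)} |u|^{10/3}\,dz\bigr)^{9/10}$, so hypothesis \eqref{V-E14} yields $C(z_0, r_0) \leq c(\varrho)\,\epsilon_0^{9/10}$ uniformly for $z_0 \in \overline{Q(\varrho)}$ at a fixed initial scale $r_0 = r_0(\varrho) > 0$.

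Then I would insert the product cutoff $\phi = \varphi_{x_0,r}^2\chi_{t_0,r}^2$, with $\phi \equiv 1$ on $Q(z_0, r/2)$, into the local energy inequality. Since the only nonlinear contribution is the cubic term $\int u\cdot\nabla\phi\,|u|^2$, standard manipulations yield the Caccioppoli-type bound
\[
A(z_0, r/2) + E(z_0, r/2) \;\leq\; c\,\bigl(C(z_0,r)^{2/3} + C(z_0,r)\bigr).
\]
In the reverse direction, a parabolic interpolation in the spirit of Ladyzhenskaya---splitting $u = \bar u + (u - \bar u)$ on $B(x_0, r)$ where $\bar u$ is the spatial mean, applying Poincar\'e and Sobolev to the oscillation, and controlling the mean by $|\bar u(t)| \leq c\,r^{-1/2}\,A(z_0, r)^{1/2}$---gives, for any $\theta \in (0, 1/4)$,
\[
C(z_0, \theta r) \;\leq\; c\,\theta\,\bigl(A(z_0, r) + E(z_0, r)\bigr)^{3/2} + c\,\theta^{-2}\,A(z_0, r)^{3/2}.
\]
Chaining these two estimates produces a contraction $C(z_0, \theta r) \leq \Psi_\theta(C(z_0, r))$, subcritical in the regime $C \ll 1$. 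Choosing $\theta$ small first and then $\epsilon_0$ small enough to close the recursion yields the Morrey-Campanato decay $C(z_0, \theta^k r_0) \leq c\,\theta^{k\gamma}$ for some $\gamma > 0$ and all $k \geq 0$, uniformly in $z_0 \in \overline{Q(\varrho)}$.

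The decay of $C$ is finally converted to pointwise H\"older continuity of $u$ via the standard Campanato characterization: the parallel decay of $E$, together with parabolic Poincar\'e, controls the relevant Campanato seminorm of $u$, giving $u \in C^{0,\gamma/3}(\overline{Q(\varrho)})$. The hardest step is to extract a strictly positive power of $\theta$ in the interpolation estimate for $C(z_0, \theta r)$; this requires carefully separating the constant-in-space piece $\bar u$, whose cube over $Q(\theta r)$ carries the factor $\theta^5$ from the parabolic measure, from the oscillation, which gains further powers of $\theta$ through Poincar\'e. Once that decay estimate is set up correctly, $\epsilon_0$ is tuned to make the recursion converge, and the remaining conversion to H\"older continuity is routine.
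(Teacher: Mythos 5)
You correctly extract from \eqref{V-E14} that $C(z_0,r_0)$ is uniformly small for $z_0\in\overline{Q(\varrho)}$ at some fixed scale $r_0$, and the Caccioppoli bound $A(z_0,r/2)+E(z_0,r/2)\leq c\,(C(z_0,r)^{2/3}+C(z_0,r))$ is correct. The gap is in the chaining step. First, the signs of the $\theta$-exponents in your interpolation estimate are reversed: the constant-in-space piece $\bar u$ (since $|\bar u(t)|\leq c\,r^{-1}A(r)^{1/2}$, not $r^{-1/2}A^{1/2}$) contributes a term with a \emph{positive} power $\theta^3 A^{3/2}(r)$, while the oscillation contributes the term with the \emph{negative} power $\theta^{-3}A^{3/4}(r)E^{3/4}(r)$; this is exactly Lemma~\ref{V-L5}, and the claim that Poincar\'e gives the oscillation further positive powers of $\theta$ is not correct, because to invoke $A(r)$ and $E(r)$ one must enlarge from $B(\theta r)$ to $B(r)$ and then divide by $(\theta r)^2$, leaving a net negative power. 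Second, and decisively, once the Caccioppoli bound is substituted in the small-$C$ regime one finds $A^{3/2}\leq c\,C$ and $A^{3/4}E^{3/4}\leq c\,C$, \emph{both} to the same order, so that $C(\theta r)\leq c\,(\theta^3+\theta^{-3})\,C(2r)$. There is no higher power of $C$ and no residual factor of $\epsilon_0$ multiplying the bad $\theta^{-3}$ term, so the recursion does not contract for any choice of $\theta$ and $\epsilon_0$: shrinking $\theta$ makes the bad term worse, shrinking $\epsilon_0$ does not remove the $\theta^{-3}$.

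The missing ingredient, which is what the paper uses, is a heat-flow decomposition. One solves $\partial_t w-\Delta w=-u\cdot\nabla u-\tfrac12 u\,\dvg u$ in $Q(3R/4)$ with $w|_{\partial'Q(3R/4)}=0$ and sets $v:=u-w$, which is caloric. The caloric part has the strong decay $\int_{Q(r)}|v-(v)_{,r}|^{5/2}\leq c\,(r/\varrho)^{7}\int_{Q(\varrho)}|v-(v)_{,\varrho}|^{5/2}$, and because the forcing is quadratic in $u$, $\int_{Q(3R/4)}|w|^{5/2}$ is bounded by $c\,[(1+M(R))M(R)]^{5/4}$ times the $L^{5/2}$-oscillation of $u$ (Lemma~\ref{V-L2}). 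This is the crucial gain your scheme lacks: the nonlinear contribution carries an extra factor $M(R)^{5/4}$, which \emph{is} small by hypothesis, so that the iteration on $\Phi(r)=\int_{Q(r)}|u-(u)_{,r}|^{5/2}$ genuinely contracts (Lemma~\ref{V-L4}) and Campanato's criterion applies. Incidentally, the direct $A$-$E$-$C$ iteration you have in mind does appear in the paper, but in the proof of Theorem~\ref{Thm3.2}: there the smallness hypothesis is on the dissipation $E$, which supplies the small factor $\epsilon^{3/4}$ in the problematic term $\theta^{-3}A^{3/4}E^{3/4}$. With smallness only on $C$, that gain is not available, and the caloric decomposition is needed.
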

Let us start with the proof of auxiliary lemmata and by mentioning that it is equivalent to prove Proposition \ref{Prop5.1} with condition \eqref{V-E14} replaced by
\begin{equation}\label{V-E15}
    \frac{1}{R}\left(\int_{Q(R)} |u|^{\frac{10}{3}} dz\right)^{\frac{3}{5}} < \epsilon_1,
\end{equation}
with $R$ fixed (say in $(0,1/2)$) and $u$ is now a suitable weak solution in $Q(2 R)$.
We also introduce the following notations
\[
\begin{gathered}
M(z_0,R) := \frac{1}{R}\left(\int_{Q(z_0,R)} |u|^{\frac{10}{3}} dz\right)^{\frac{3}{5}}\\
|u|^2_{2,Q(z_0,R)} := \mbox{ess}\sup_{t_0-R^2< t < t_0} \int_{B(x_0,R)}|u(\cdot,t)|^2 dx + \int_{Q(z_0,R)}|\nabla u|^2 dz
\end{gathered}
\]
and for simplicity, we take $M(R)= M(0,R)$. Also, unless otherwise specified all the constants $c$ in this section are positive universal constants.
\begin{lemma}[A Caccioppoli type inequality]\label{V-L1}
Let $u$ be a suitable weak solution to \eqref{Toy-NS1} in $Q(2R)$ and $\tau \in (0,1)$, then
\begin{equation}
    |\bar{u}|^2_{2,Q(\tau R)} \leq \frac{c}{ R}\left( \int_{Q(R)} |\bar{u}|^{\frac{5}{2}}dz\right)^{\frac{4}{5}}\left(\frac{1+M(R)}{(1-\tau)^2}+\frac{M(R)}{\tau^3}\right),
\end{equation}
where $\bar{u} := u - (u)_{,\tau R}$.
\end{lemma}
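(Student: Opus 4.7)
The plan is to derive a local energy inequality adapted to the modified field $\bar u = u - c$ with $c = (u)_{,\tau R}$, then localise with a suitable cutoff and estimate each resulting term using H\"older, Sobolev, and Young. To pass from the local energy inequality of Definition~\ref{def2.1} for $u$ to one for $\bar u$, I will combine it with the equation tested against $2c\phi$ (where $\phi$ is the cutoff and $c$ is treated as a constant vector test). Since $c$ is constant in space-time, the pure $|c|^2$ contributions cancel identically (using $\phi(\cdot,T_1)=0$ and $\int\Delta\phi\,dx = 0$), while the linear-in-$c$ terms can be reorganised via integration by parts. After the usual Navier-Stokes-style cancellation in the cubic terms, one should be left with
\begin{multline*}
\int\phi|\bar u|^2(t)\,dx + 2\int\int\phi|\nabla\bar u|^2\,dz \leq \int\int|\bar u|^2(\partial_s + \Delta)\phi\,dz \\
+ \int\int u\cdot\nabla\phi\,|\bar u|^2\,dz - \int\int(c\cdot\bar u)\phi\,\dvg\bar u\,dz.
\end{multline*}
The last term is specific to our toy model: for divergence-free $u$ it would vanish, and its treatment is the main novelty of the argument.

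Next, I will take $\phi = \varphi^2\chi^2$, where $\varphi \equiv 1$ on $B(\tau R)$, is supported in $B(\tau' R)$, with $|\nabla\varphi|\lesssim (R(\tau'-\tau))^{-1}$, and $\chi$ is an analogous temporal cutoff, for arbitrary $\tau < \tau' \leq 1$. Writing $A(\rho):= |\bar u|^2_{2,Q(\rho R)}$, the term involving $(\partial_s+\Delta)\phi$ is straightforward: H\"older gives $\int_{Q(R)}|\bar u|^2 \lesssim R(\int|\bar u|^{5/2})^{4/5}$, yielding a contribution of size $C(\tau'-\tau)^{-2}R^{-1}(\int|\bar u|^{5/2})^{4/5}$. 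For the cubic term $\int u\cdot\nabla\phi\,|\bar u|^2$, I will apply H\"older with exponents $(10/3, 20/7)$, interpolate $\|\bar u\|_{L^{20/7}}\leq \|\bar u\|_{L^{5/2}}^{1/2}\|\bar u\|_{L^{10/3}}^{1/2}$, and bound $\|\bar u\|_{L^{10/3}(Q(\tau' R))}\leq C\,A(\tau')^{1/2}$ via the Ladyzhenskaya-type embedding (whose proof needs only the scale-invariant Sobolev estimate on balls, avoiding any Poincar\'e inequality for $\bar u$). A Young's inequality then produces a contribution of the form $\varepsilon A(\tau') + CM(R)R^{-1}(\tau'-\tau)^{-2}(\int|\bar u|^{5/2})^{4/5}$.

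The new term $\int(c\cdot\bar u)\phi\,\dvg\bar u$ will be handled by Cauchy-Schwarz and Young: it is dominated by $\eta\int\phi|\nabla\bar u|^2 + C|c|^2\int_{Q(R)}|\bar u|^2$, the first being absorbed into the dissipation on the left-hand side. The key step is then the H\"older bound on the average,
\[ |c|\, \leq\, |Q(\tau R)|^{-3/10}\left(\int_{Q(R)}|u|^{10/3}\right)^{3/10}\, \lesssim\, \tau^{-3/2}R^{-1}M(R)^{1/2}, \]
which, squared and combined with $\int_{Q(R)}|\bar u|^2 \lesssim R(\int|\bar u|^{5/2})^{4/5}$, produces exactly the $M/\tau^3$ factor appearing in the statement of the lemma.

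Assembling these pieces gives an inequality of the form $A(\tau)\leq \varepsilon A(\tau') + F(\tau,\tau')$ valid for every $0<\tau<\tau'\leq 1$, where $F(\tau,\tau')$ has the structure of the target right-hand side but with $(\tau'-\tau)^{-2}$ in place of $(1-\tau)^{-2}$. Choosing $\varepsilon$ small enough, the standard Giaquinta-Giusti iteration lemma absorbs the $\varepsilon A(\tau')$ term and converts $(\tau'-\tau)^{-2}$ into $(1-\tau)^{-2}$, yielding the stated bound; the $\tau^{-3}$ piece, depending only on the inner radius, is unaffected by the iteration since it is monotone in the variable being iterated. The hard part, beyond the algebraic bookkeeping needed to derive the $\bar u$-energy inequality, is the treatment of the extra term $\int(c\cdot\bar u)\phi\,\dvg\bar u$ coming from the non-divergence-free nonlinearity $\tfrac12 u\dvg u$; extracting the singular $\tau^{-3}$ factor from it via the sharp H\"older bound on $|c|$ is where most of the care goes.
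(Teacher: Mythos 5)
Your proposal is correct and follows the same overall strategy as the paper: derive a local energy inequality for $\bar u$ by combining the suitable-weak-solution inequality with the equation tested against $2c\phi$, observe the cancellations that leave the single extra term $c\cdot\int\phi\,\bar u\,\dvg\bar u$, estimate the cubic term via H\"older and the Ladyzhenskaya embedding, and extract the $\tau^{-3}M(R)$ factor from the H\"older bound $|c|\lesssim\tau^{-3/2}R^{-1}M(R)^{1/2}$. The one genuine structural difference is in the absorption step. You bound $\|\bar u\|_{10/3,Q(\tau'R)}$ by the \emph{uncut} scaled energy $A(\tau')^{1/2}$, which makes the Young-inequality remainder $\varepsilon A(\tau')$ land on a larger cylinder than the left-hand side; you therefore need the Giaquinta--Giusti iteration lemma to absorb it and to convert $(\tau'-\tau)^{-2}$ into $(1-\tau)^{-2}$. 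The paper instead keeps one factor of the cutoff $\varphi_{\varrho,r}$ attached to $\bar u$ inside the interpolation step, i.e.\ it bounds $\|\bar u\varphi_{\varrho,r}\|_{10/3,Q(\varrho)}\leq c|\bar u\varphi_{\varrho,r}|_{2,Q(\varrho)}$, which is precisely the quantity on the left-hand side after localisation; the Young remainder then absorbs directly without any iteration, and one simply sets $r=\tau R$, $\varrho=R$ at the end. Both methods give the stated constant structure; the paper's version is more self-contained (no iteration lemma needed), while yours has the advantage of not having to track the cutoff through the Sobolev embedding. One small point worth making explicit if you write this up: in the iteration, $\bar u=u-(u)_{,\tau R}$ and hence the constant $c$ must be kept fixed at the \emph{target} ratio $\tau$ while the cutoff radii $\rho_1<\rho_2$ vary over $[\tau,1]$; this is consistent since the bound on $|c|$ depends only on $\tau$ and not on the cutoff, but the notation in your sketch conflates the two roles of $\tau$.
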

\begin{proof}
Making use of the fact that $u$ is a suitable weak solution, we have that
\begin{multline*}
    \int_{B(R)}|\bar{u}(\cdot,t_0)|^2\varphi^2_{\varrho,r}(\cdot,t_0) dx + 2 \int_{-R^2}^{t_0}\int_{B(R)}|\nabla \bar{u}|^2\varphi^2_{\varrho,r} dz\\ \leq \int_{-R^2}^{t_0}\int_{B(R)}|\bar{u}|^2(\partial_t \varphi^2_{\varrho,r} + \Delta \varphi^2_{\varrho,r}) dz + \int_{-R^2}^{t_0}\int_{B(R)}|\bar{u}|^2 u\cdot \nabla \varphi^2_{\varrho,r} dz \\ + (u)_{,r}\cdot \int_{-R^2}^{t_0}\int_{B(R)} \bar{u}\varphi^2_{\varrho,r} (\dvg u) dz,
\end{multline*}
for a.e. $t_0 \in (-R^2,0)$, $0<r<\varrho\leq R$; here $0\leq\varphi_{\varrho,r}\leq 1$ is a cut-off function with the following properties: $\varphi_{\varrho,r} \in C^{\infty}_0(B(\varrho)\times (-\varrho^2,\varrho))$, $\varphi_{\varrho,r}\equiv 1$ in $B(r)\times (-r^2,r^2)$, $|\nabla^k \varphi_{\varrho,r}| \leq c/(\varrho-r)^k$, $k=1,2$, and $\partial_t \varphi_{\varrho,r} \leq c/(\varrho - r)^2$. From the previous inequality, we get that
\begin{multline}\label{V-E17}
    |\bar{u}\varphi_{\varrho,r}|^2_{2,Q(\varrho)} \leq c \left[\left(\frac{1}{(\varrho - r)^2}+|(u)_{,r}|^2\right)\int_{B(\varrho)}|\bar{u}|^2 dz\right.\\ +\left. \frac{1}{\varrho-r}\left(\int_{B(\varrho)}|\bar{u}|^{\frac{5}{2}}dz\right)^{\frac{2}{5}}\left(\int_{Q(\varrho)}|\bar{u}\varphi_{\varrho,r}|^{\frac{10}{3}}dz \right)^{\frac{3}{10}}
    \left(\int_{Q(\varrho)}|u|^{\frac{10}{3}}dz \right)^{\frac{3}{10}} \right].
\end{multline}
By interpolation inequality and Sobolev embedding, we have that
\[
\|\bar{u}\varphi_{\varrho,r}\|_{\frac{10}{3},Q(\varrho)} \leq c |\bar{u}\varphi_{\varrho,r}|_{2,Q(\varrho)}.
\]
Therefore, using Young's inequality for the last term on the right hand side of \eqref{V-E17}, we have
\[ 
|\bar{u}\varphi_{\varrho,r}|^2_{2,Q(\varrho)} \leq c\left[ \left(\frac{1}{(\varrho - r)^2}+|(u)_{,r}|^2\right)\int_{B(\varrho)}|\bar{u}|^2 dz + \frac{R M(R)}{(\varrho-r)^2} \left(\int_{B(\varrho)}|\bar{u}|^{\frac{5}{2}}dz\right)^{\frac{4}{5}}\right].
\]
Next, using the fact that
\[
\int_{B(\varrho)}|\bar{u}|^2 dz \leq |Q(R)|^{\frac{1}{5}}\left(\int_{B(\varrho)}|\bar{u}|^{\frac{5}{2}}dz\right)^{\frac{4}{5}},
\]
and that
\[
|(u)_{,r}|^2 \leq \frac{c}{r^3}RM(R),
\]
we get
\begin{equation}\label{V-E18}
    |\bar{u}\varphi_{\varrho,r}|^2_{2,Q(\varrho)} \leq c \left( \frac{R+RM(R)}{(\varrho-r)^2} + \frac{R^2 M(R)}{r^3} \right)\left(\int_{B(\varrho)}|\bar{u}|^{\frac{5}{2}}dz\right)^{\frac{4}{5}}.
\end{equation}
Finally, taking $r = \tau R$, $\varrho = R$, we have that the lemma is proved.
\end{proof}
We consider now the following initial boundary value problem
\begin{equation}\label{V-E19}
    \begin{cases}
    \partial_t w - \Delta w = F\quad \mbox{in }Q(\frac{3}{4}R)\\
    w|_{\partial'Q(\frac{3}{4}R)} = 0,
    \end{cases}
\end{equation}
where $F := -u\cdot \nabla u - \frac{1}{2}u\dvg u$ and the symbol "$\partial'$" stands for the parabolic boundary; we have on one hand that 
\begin{equation}\label{V-E20}
    \int_{Q(\frac{3}{4}R)}|F|^{\frac{5}{4}}dz \leq c  \left(|\bar{u}|^2_{2,Q(\frac{3}{4}R)}RM(R) \right)^{\frac{5}{8}}.
\end{equation}
On the other hand, we have that the problem \eqref{V-E19} is uniquely solvable and moreover and the following estimate holds
\begin{equation}\label{V-E21}
    \int_{Q(\frac{3}{4}R)}\left(|\partial_t w|^{\frac{5}{4}} + |\nabla^2 w|^{\frac{5}{4}}\right)dz \leq c \int_{Q(\frac{3}{4}R)}|F|^{\frac{5}{4}} dz.
\end{equation}
Next, we have thanks to parabolic embeddings that
\begin{equation}\label{V-E22}
    \int_{Q(\frac{3}{4}R)}|w|^{\frac{5}{2}}dz \leq c\left[ \int_{Q(\frac{3}{4}R)}\left(|\partial_t w|^{\frac{5}{4}} + |\nabla^2 w|^{\frac{5}{4}}\right)dz\right]^2.
\end{equation}
Finally, by combining inequalities \eqref{V-E20},\eqref{V-E21}, \eqref{V-E22} and using Lemma \ref{V-L1} (with $\tau = 3/4$), we get that
\begin{equation}\label{V-E23}
    \int_{Q(\frac{3}{4}R)}|w|^{\frac{5}{2}}dz \leq c [(1+M(R))M(R)]^{\frac{5}{4}}\int_{Q(\frac{3}{4}R)}|\bar{u}|^{\frac{5}{2}}dz.
\end{equation}
Now, we introduce the function $v:= u-w$ and notice that
\[
\partial_t v - \Delta v = 0, \quad\mbox{in } Q(\frac{3}{4}R),
\]
and therefore, the following estimate is valid
\begin{equation}\label{V-E24}
    \int_{Q(r)}|v -(v)_{,r}|^{\frac{5}{2}} dz \leq c \left( \frac{r}{\varrho}\right)^{5+2}\int_{Q(\varrho)}|v -(v)_{,\varrho}|^{\frac{5}{2}} dz,
\end{equation}
for all $0<r<\varrho\leq 3R/4$.
Next, we have the following lemma
\begin{lemma}\label{V-L2}
Let $u$ be a suitable weak solution to \eqref{Toy-NS1} in $Q(2R)$, then
\[
\int_{Q(r)}|u -(u)_{,r}|^{\frac{5}{2}} dz \leq c\left[\left(\frac{r}{R}\right)^{5+2} + (1+M(R))^\frac{5}{4}M(R)^{\frac{5}{4}} \right]\int_{Q(\frac{3}{4}R)}|u -(u)_{,\frac{3}{4}R}|^{\frac{5}{2}} dz,
\]
for all $0<r<3R/4$.
\end{lemma}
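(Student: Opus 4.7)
The plan is to exploit the linear decomposition $u = v + w$ that was set up just before the lemma, pairing the caloric decay estimate \eqref{V-E24} for $v$ with the perturbation bound \eqref{V-E23} for $w$ in the standard Campanato-type style.

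First I would recall that $w$ solves the inhomogeneous problem \eqref{V-E19} on $Q(\tfrac{3}{4}R)$ with zero parabolic-boundary data, so $v = u - w$ satisfies the homogeneous heat equation on the same cylinder and \eqref{V-E24} applies. Applying the triangle inequality in $L_{5/2}$ and the elementary bound $|w - (w)_{,r}|^{5/2} \le c\,|w|^{5/2}$ (after centering in the mean), for $0 < r < 3R/4$ I get
\begin{align*}
\int_{Q(r)} |u - (u)_{,r}|^{5/2}\, dz
&\le c \int_{Q(r)} |v - (v)_{,r}|^{5/2}\, dz + c \int_{Q(r)} |w - (w)_{,r}|^{5/2}\, dz \\
&\le c \int_{Q(r)} |v - (v)_{,r}|^{5/2}\, dz + c \int_{Q(3R/4)} |w|^{5/2}\, dz.
\end{align*}

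Next I would feed the caloric decay estimate \eqref{V-E24} into the first integral on the right, taking $\varrho = 3R/4$, to produce the factor $(r/R)^{7}$. Then I would reroute back from $v$ to $u$: since $v - (v)_{,3R/4} = (u - (u)_{,3R/4}) - (w - (w)_{,3R/4})$, one more triangle inequality gives
\[
\int_{Q(3R/4)} |v - (v)_{,3R/4}|^{5/2}\, dz \le c \int_{Q(3R/4)} |u - (u)_{,3R/4}|^{5/2}\, dz + c \int_{Q(3R/4)} |w|^{5/2}\, dz.
\]
At this stage both error terms are of the form $\int_{Q(3R/4)} |w|^{5/2}\, dz$, which is exactly what \eqref{V-E23} controls (recalling that in \eqref{V-E23} the function $\bar u$ is $u - (u)_{,3R/4}$, since the Caccioppoli estimate of Lemma~\ref{V-L1} was invoked with $\tau = 3/4$).

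Plugging \eqref{V-E23} in then yields
\[
\int_{Q(3R/4)} |w|^{5/2}\, dz \le c\,[(1+M(R))\,M(R)]^{5/4} \int_{Q(3R/4)} |u - (u)_{,3R/4}|^{5/2}\, dz,
\]
and collecting the $(r/R)^{7}$ contribution from the caloric part with the smallness contribution from the $w$-part gives the claimed bound, after using $(r/R)^{7} \le 1$ to absorb the mixed cross term. The only real bookkeeping obstacle is the centering mismatch between $(u)_{,r}$, $(v)_{,r}$, $(v)_{,3R/4}$ and $(u)_{,3R/4}$; but since we are working in $L_{5/2}$ and each centering shift costs only an application of the triangle inequality plus Poincaré/mean-value comparison, this is routine and does not interact with the smallness parameter $M(R)$.
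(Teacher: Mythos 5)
Your proof is correct and takes essentially the same approach as the paper: decompose $u = v + w$, apply the triangle inequality in $L_{5/2}$, use the caloric decay estimate \eqref{V-E24} for $v$ with $\varrho = 3R/4$, transfer back from $v$ to $u$ via one more triangle inequality, and bound the resulting $\int_{Q(3R/4)}|w|^{5/2}$ terms with \eqref{V-E23} (noting $\bar u = u - (u)_{,3R/4}$ there). The bookkeeping on centerings and absorbing the $(r/R)^7 \le 1$ factor is handled exactly as in the paper.
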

\begin{proof}
 We have, for all $0<r<3R/4$, that
 \begin{align*}
     \phantom{{}\leq{}}
     \int_{Q(r)}|u -(u)_{,r}|^{\frac{5}{2}} dz &\leq c\left( \int_{Q(r)}|v -(v)_{,r}|^{\frac{5}{2}} dz + \int_{Q(r)}|w -(w)_{,r}|^{\frac{5}{2}} dz\right)\\
     &\leq c\left( \frac{r}{R}\right)^{5+2}\int_{Q(\frac{3}{4}R)}|v -(v)_{,\frac{3}{4}R}|^{\frac{5}{2}} dz + c \int_{Q(r)}|w|^{\frac{5}{2}}dz \mbox{ (here we used \eqref{V-E24})}\\
     &\leq c\left( \frac{r}{R}\right)^{5+2}\int_{Q(\frac{3}{4}R)}|u -(u)_{,\frac{3}{4}R}|^{\frac{5}{2}} dz + c \int_{Q(\frac{3}{4}R)}|w|^{\frac{5}{2}}dz\\
     &\leq c\left[\left(\frac{r}{R}\right)^{5+2} + (1+M(R))^\frac{5}{4}M(R)^{\frac{5}{4}} \right]\int_{Q(\frac{3}{4}R)}|u -(u)_{,\frac{3}{4}R}|^{\frac{5}{2}} dz,
 \end{align*}
 where \eqref{V-E23} is used to obtain the last line.
\end{proof}
Our goal now is to iterate Lemma \ref{V-L2} (see \cite{Ser03} for a similar situation). We start with the following lemma.
\begin{lemma}\label{V-L3}
Let $u$ be a suitable weak solution to \eqref{Toy-NS1} in $Q(2R)$ and $\tau \in (0,1)$, then
\[
\sqrt{M(\tau^{k+1} R)} \leq \frac{c}{(1-\tau)\tau^{\frac{7}{2}}}\sum_{i=0}^{k-1}\tau^i(1+M(\tau^i R))^{\frac{1}{2}}M(\tau^i R)^{\frac{1}{2}}
    + \tau^k \sqrt{M(\tau R)},
\]
with $k=1,2,\ldots$
\end{lemma}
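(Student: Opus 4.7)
My approach would be to iterate Lemma~\ref{V-L2} with the aid of Lemma~\ref{V-L1} and a parabolic Sobolev embedding, following the scheme of \cite{Ser03}. The lemma is really an \emph{iterated} form of a one-step inequality that I would first extract.

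The first step is to set $A(\rho):=\int_{Q(\rho)}|u-(u)_{,\rho}|^{5/2}\,dz$ and note, via Hölder's inequality, the elementary bound $A(\rho)\leq c\rho^{5/2}M(\rho)^{5/4}$; in particular, the scale-invariant quantity $\Psi(\rho):=\rho^{-1}A(\rho)^{2/5}$ satisfies $\Psi(\rho)\leq c\sqrt{M(\rho)}$. Applying Lemma~\ref{V-L2} with $r=\tau\rho$ and $R=\rho$, and using the sub-additivity of $t\mapsto t^{2/5}$, yields the recursion
\[
    \Psi(\tau\rho) \leq c\,\tau^{9/5}\,\Psi(\rho) + \frac{c}{\tau}(1+M(\rho))^{1/2}M(\rho)^{1/2}\,\Psi(\rho).
\]

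The second step is to translate back from $\Psi$ to $\sqrt{M}$. Splitting $u=(u-(u)_{,\tau\rho})+(u)_{,\tau\rho}$, applying the parabolic Sobolev embedding $\|v\|_{L^{10/3}(Q(\tau\rho))}\leq c|v|_{2,Q(\tau\rho)}$ (for $v$ of zero spatial mean at each time), combining with Lemma~\ref{V-L1} to control the energy norm $|u-(u)_{,\tau\rho}|_{2,Q(\tau\rho)}$, and using the identity $|(u)_{,r}|^2\leq cRM(R)/r^3$ derived in the proof of Lemma~\ref{V-L1} to handle the mean term, I would obtain the one-step bound
\[
    \sqrt{M(\tau\rho)} \leq \tau\sqrt{M(\rho)} + \frac{c}{(1-\tau)\tau^{7/2}}(1+M(\rho))^{1/2}M(\rho)^{1/2}.
\]
The exponent $7/2$ arises from combining the $(1-\tau)^{-2}\tau^{-3}$ of Lemma~\ref{V-L1} with the $\tau^{-1/2}$ scale factor inherited from the Sobolev embedding.

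The third step is to iterate the one-step bound, applying it successively with $\rho=\tau^jR$ for $j=k,k-1,\ldots,1$. The clean geometric factor $\tau$ compounds to give $\tau^k\sqrt{M(\tau R)}$, and the error terms accumulate, after a re-indexing $i=k-j$, into the weighted sum $\frac{c}{(1-\tau)\tau^{7/2}}\sum_{i=0}^{k-1}\tau^i(1+M(\tau^iR))^{1/2}M(\tau^iR)^{1/2}$ claimed in the statement.

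The chief technical obstacle will be in the second step: obtaining the clean factor $\tau$ (rather than the weaker $\tau^{-1/2}$ one gets from a direct application of the Sobolev embedding) on $\sqrt{M(\rho)}$. The improvement must come from the decay $\tau^{9/5}$ in the recursion for $\Psi$, which---after bounding $\Psi\leq c\sqrt{M}$ only in the good part---can be absorbed into the weaker geometric rate $\tau$. Keeping careful track of the $\tau$-powers so that the constant $\frac{c}{(1-\tau)\tau^{7/2}}$ comes out exactly as stated is the delicate bookkeeping that the proof will turn on.
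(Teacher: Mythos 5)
Your plan deviates from the paper's proof in a way that matters. The paper's proof of Lemma~\ref{V-L3} does \emph{not} use Lemma~\ref{V-L2} or any heat-flow decomposition; it is a self-contained argument using only Lemma~\ref{V-L1}, a Sobolev embedding, and an elementary mean-value estimate. Your first step (the $\Psi$-recursion from Lemma~\ref{V-L2}) is therefore a detour, and your explanation of how the ``clean'' factor $\tau$ arises --- ``the improvement must come from the decay $\tau^{9/5}$ in the recursion for $\Psi$'' --- is not the correct mechanism, nor is it clear how one would extract a $\tau$-gain on $\sqrt{M(\rho)}$ out of that recursion, since the recursion controls the fluctuation quantity $\Psi$ rather than $\sqrt{M}$ itself.

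The obstacle you correctly identify in your second step is real \emph{for the decomposition you chose}. Writing $u=(u-(u)_{,\tau\rho})+(u)_{,\tau\rho}$, i.e.\ taking the mean at the same scale as the target cylinder $Q(\tau\rho)$, the mean term contributes $|(u)_{,\tau\rho}|\,|Q(\tau\rho)|^{3/10}\sim (\tau\rho)^{1/2}\sqrt{M(\tau\rho)}$, which after dividing by $\sqrt{\tau\rho}$ is just $\sqrt{M(\tau\rho)}$ again: no gain. The paper avoids this by a \emph{two-scale} decomposition. It bounds $\sqrt{\tau^2R\,M(\tau^2R)}$ starting from
\[
\|u\|_{10/3,Q(\tau^2R)}\ \le\ \|u-(u)_{,\tau R}\|_{10/3,Q(\tau R)} + |(u)_{,\tau R}|\,|Q(\tau^2 R)|^{3/10},
\]
i.e.\ the cylinder on the left is $Q(\tau^2 R)$ but the mean and the fluctuation are taken at the \emph{larger} scale $\tau R$. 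Because $|Q(\tau^2 R)|^{3/10}\sim(\tau^2 R)^{3/2}$ while $|(u)_{,\tau R}|\lesssim (\tau R)^{-1}\sqrt{M(\tau R)}$, the mean term computes out to $\tau^{3/2}\sqrt{\tau R\,M(\tau R)}$, and dividing by $\tau R^{1/2}$ gives precisely $\tau\sqrt{M(\tau R)}$. The fluctuation term is controlled by the parabolic Sobolev embedding into the energy norm $|u-(u)_{,\tau R}|_{2,Q(\tau R)}$ and then by Lemma~\ref{V-L1}; the extra $\tau^{-1}$ in the prefactor comes from shifting the mean from $(u)_{,\tau R}$ to $(u)_{,R}$ inside the $L^{5/2}$-integral over $Q(R)$. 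The whole estimate lives on three scales $\tau^2 R\subset \tau R\subset R$, and this is what produces the $\tau$ gain; no auxiliary $\Psi$ iteration is involved.

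Once this one-step inequality is in hand, the iteration in your third step is routine and matches the paper. So the gap is concentrated in the passage you flagged as ``the delicate bookkeeping'': the resolution is a choice of mean value at a strictly larger scale than the target cylinder, not the $\Psi$-recursion from Lemma~\ref{V-L2}.
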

\begin{proof}
 We have 
 \begin{align*}
     \sqrt{\tau^2 R M(\tau^2 R)} & \leq \left( \|u-(u)_{\tau R}\|_{10/3,Q(\tau R)} + |(u)_{\tau R}| \times |Q(\tau^2 R )|^{\frac{3}{10}}\right)\\
     & \leq c |u - (u)_{\tau R}|_{2,Q(\tau R)} + \tau^{\frac{3}{2}}\sqrt{\tau R M(\tau R)},
 \end{align*}
 thus making use of Lemma \ref{V-L1}, we get
\begin{equation}\label{V-E25}
    \sqrt{M(\tau^2 R)} \leq \frac{c}{(1-\tau)\tau^{\frac{5}{2}}}(1+M(R))^{\frac{1}{2}}\left(\frac{1}{R^{\frac{5}{2}}}\int_{Q(R)}|u-(u)_{,\tau R}|^{\frac{5}{2}}dz \right)^{\frac{2}{5}}\\
    + \tau \sqrt{M(\tau R)}.
\end{equation}
Now notice that (since $\tau<1$)
\[
\int_{Q(R)}|u-(u)_{,\tau R}|^\frac{5}{2}dz \leq \frac{c}{\tau^5}\int_{Q(R)}|u-(u)_{,R}|^\frac{5}{2}dz,
\]
therefore \eqref{V-E25} becomes
\begin{equation}\label{V-E26}
    \sqrt{M(\tau^2 R)} \leq \frac{c}{(1-\tau)\tau^{\frac{7}{2}}}(1+M(R))^{\frac{1}{2}}\left(\frac{1}{R^{\frac{5}{2}}}\int_{Q(R)}|u-(u)_{,R}|^{\frac{5}{2}}dz \right)^{\frac{2}{5}}\\
    + \tau \sqrt{M(\tau R)};
\end{equation}
and the lemma is proved by iterating the above inequality and noticing that 
\begin{equation}
    \left(\frac{1}{(\tau^i R)^{\frac{5}{2}}}\int_{Q(\tau^i R)}|u-(u)_{,\tau^i R}|^{\frac{5}{2}}dz\right)^{\frac{2}{5}} \leq c M(\tau^i R)^{\frac{1}{2}},
\end{equation}
for every integer $i$.
\end{proof}
\begin{remark}\label{V-R1}
    Let us notice that for $\epsilon_1 \in (0,1)$ as given in \eqref{V-E15} and for any $\tau \in (0,1/4)$, we have that
    \begin{equation}\label{V-E28}
        M(\tau^k R) \leq \frac{\epsilon_1}{B},
    \end{equation}
    for every positive integer, and with
    \[ B = B(\tau) = \max\left\{ \frac{(1-\tau)^2\tau^7}{2^7 c^2}, \frac{3 \tau}{1-4\tau} \right\}, \]
    here the constant $c$ is the same as in Lemma \ref{V-L3}. We can also show without too much difficulty that 
    \[ 
    (1+\frac{\epsilon_1}{B})\times\frac{\epsilon_1}{B} \leq \frac{(1-\tau)}{3\tau}\times \frac{\epsilon_1}{B};
    \]
    set for simplicity
    \[ B_1 = \frac{3 \tau B}{1-\tau} \]
\end{remark}
By iterating Lemma \ref{V-L2} and taking into account Remark \ref{V-R1}, we obtain the following.
\begin{lemma}\label{V-L4}
Let $u$ be a suitable weak solution to \eqref{Toy-NS1} in $Q(2R)$ such that \eqref{V-E15} holds, and let $\tau \in (0,1/4)$; then
\[
\int_{Q(r)}|u -(u)_{,r}|^{\frac{5}{2}} dz \leq \frac{2^{\frac{5}{2}}}{\tau^{12}R^6}r^{5+2-1}\int_{Q(\tau R)}|u -(u)_{,\tau R}|^{\frac{5}{2}} dz,
\]
for all $0<r<\tau R$ whenever $\tau \leq 1/(2 c)$, with $c$ given in \eqref{V-E29} and $\ee_1$ is choosen such that $(\epsilon_1/B_1)^{\frac{5}{4}} \leq \tau^{5+2}$.
\end{lemma}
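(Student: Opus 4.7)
The plan is to iterate Lemma~\ref{V-L2} along the geometric sequence of scales $R_k := \tau^k R$ ($k=0,1,2,\ldots$), writing
\[ \Phi(\rho) := \int_{Q(\rho)} |u - (u)_{,\rho}|^{5/2}\, dz. \]
For $k \geq 1$, Remark~\ref{V-R1} gives $M(R_k) \leq \epsilon_1/B$, and combined with the second inequality of that remark together with the smallness assumption $(\epsilon_1/B_1)^{5/4} \leq \tau^{5+2}$, we obtain
\[ (1 + M(R_k))^{5/4}\, M(R_k)^{5/4} \leq (\epsilon_1/B_1)^{5/4} \leq \tau^{5+2}. \]
Applying Lemma~\ref{V-L2} with ``$R$'' replaced by $R_{k-1}$ and ``$r$'' replaced by $\tau R_{k-1} = R_k$ (admissible since $\tau < 1/4 < 3/4$), this yields, for every $k \geq 2$,
\[ \Phi(R_k) \leq 2c\,\tau^{5+2}\, \Phi(3R_{k-1}/4). \]

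The next ingredient is a standard mean-shift estimate: splitting $u - (u)_{,\rho'} = (u - (u)_{,\rho}) + ((u)_{,\rho} - (u)_{,\rho'})$ for $\rho' < \rho$, applying the triangle inequality in $L^{5/2}$, and using H\"older to bound the shift of the means, one obtains $\Phi(\rho') \leq 2^{5/2}\Phi(\rho)$. In particular $\Phi(3R_{k-1}/4) \leq 2^{5/2}\Phi(R_{k-1})$ and, for $R_{k+1} < r \leq R_k$, $\Phi(r) \leq 2^{5/2}\Phi(R_k)$. Combining this with the previous step gives $\Phi(R_k) \leq \gamma\,\Phi(R_{k-1})$ with $\gamma := 2^{7/2}c\,\tau^{5+2}$, and iterating yields $\Phi(R_k) \leq \gamma^{k-1}\,\Phi(\tau R)$ for every $k \geq 1$.

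To finish, fix $r \in (0,\tau R)$ and pick $k \geq 1$ with $R_{k+1} < r \leq R_k$. Then $\Phi(r) \leq 2^{5/2}\gamma^{k-1}\Phi(\tau R)$. The condition $\tau \leq 1/(2c)$ (after absorbing the factor $2^{7/2}$ into $c$) ensures $\gamma \leq \tau^{5+2-1} = \tau^6$, so $\gamma^{k-1} \leq \tau^{6(k-1)}$. Since $r > \tau^{k+1} R$ gives $(r/R)^6 > \tau^{6(k+1)}$, we get $\gamma^{k-1} \leq (r/R)^6/\tau^{12}$, and therefore
\[ \Phi(r) \leq \frac{2^{5/2}}{\tau^{12} R^6}\, r^6\, \Phi(\tau R), \]
as claimed.

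The main obstacle is threading the one-power-of-$\tau$ discrepancy between the decay exponent $\tau^{5+2}$ delivered by Lemma~\ref{V-L2} at each step and the target decay $\tau^{5+2-1}$ in the conclusion; this slack is exactly what is bought by the smallness $\tau \leq 1/(2c)$, and it is precisely this trade-off that forces the $\tau^{-12}$ blow-up in the prefactor. The mean-shift estimate $\Phi(\rho') \leq 2^{5/2}\Phi(\rho)$ is straightforward but must be applied cleanly so that the iteration constant $\gamma$ remains independent of $k$, otherwise the geometric decay collapses.
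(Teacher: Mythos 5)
Your proof is correct and follows essentially the same route as the paper: iterate Lemma~\ref{V-L2} at the dyadic scales $R_k = \tau^k R$, use Remark~\ref{V-R1} to bound the $M$-term by $(\epsilon_1/B_1)^{5/4} \leq \tau^{5+2}$, trade one power of $\tau$ via the smallness $\tau \leq 1/(2c)$ to get geometric decay with ratio $\tau^{5+2-1}$, and finally interpolate to arbitrary $r$. You are in fact slightly more careful than the source at two places it elides: the paper's "obviously" step that replaces the $3R/4$-scale on the right of Lemma~\ref{V-L2} by the $R$-scale already hides the $2^{5/2}$ mean-shift factor inside the constant $c$ of \eqref{V-E29}, and the paper dismisses the passage from $\tau^kR$ to general $r$ as "fairly standard" — you spell both out, which is exactly why your $\gamma$ carries $2^{7/2}$ rather than $2$. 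Provided one reads "$c$ given in \eqref{V-E29}" as the constant that already absorbs the mean-shift factor (which is the paper's intent), your hypothesis $\tau \leq 1/(2c)$ matches the paper's.
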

\begin{proof}
 We obviously have from Lemma \ref{V-L2} that
 \[
\int_{Q(r)}|u -(u)_{,r}|^{\frac{5}{2}} dz \leq c\left[\left(\frac{r}{R}\right)^{5+2} + (1+M(R))^\frac{5}{4}M(R)^{\frac{5}{4}} \right]\int_{Q(R)}|u -(u)_{,R}|^{\frac{5}{2}} dz,
\]
for all $0<r<3R/4$. We take $r=\tau R$ and derive the following recursive formula
\begin{equation*}
    \int_{Q(\tau^{k+1} R)}|u -(u)_{,\tau^{k+1}R}|^{\frac{5}{2}} dz \leq c\left[\tau^{5+2} + (1+M(\tau^k R))^\frac{5}{4}M(\tau^k R)^{\frac{5}{4}} \right]\int_{Q(\tau^k R)}|u -(u)_{,\tau^k R}|^{\frac{5}{2}} dz,
\end{equation*}
Setting for simplicity
\[ \Phi(r) := \int_{Q(r)}|u -(u)_{,r}|^{\frac{5}{2}} dz, \]
and taking into account Remark \ref{V-R1}, we get
\begin{equation}\label{V-E29}
    \Phi(\tau^{k+1} R) \leq c (\tau^{5+2} + (\epsilon_1/B_1)^{5/4})\Phi(\tau^k R).
\end{equation}
We add the following additional restriction $\tau\leq \min\{1/(2c),1/4\}$ ($c$ as in \eqref{V-E29}) and define $\epsilon_* = \tau^{5+2}$; we have for 
\begin{equation}
    \left(\frac{\epsilon_1}{B_1}\right)^{\frac{5}{4}} \leq \epsilon_*,
\end{equation}
that
\begin{align*}
    \Phi(\tau^{k+1} R) &\leq c \tau \tau^{5+2-1}(1+\epsilon_*\tau^{-5-2})\Phi(\tau^k R)\\
    &\leq \tau^{5+2-1}\Phi(\tau^k R).
\end{align*}
Iterating the last inequality in $k$ starting with $k=1$, we find
\[ \Phi(\tau^k R) \leq \tau^{(k-1)(5+2-1)}\Phi(\tau R), \]
for any positive integer $k$. The remaining of the proof is fairly standard. The lemma is proved.
\end{proof}
We are now ready to prove Proposition \ref{Prop5.1}.
\begin{proof}[Proof of Proposition \ref{Prop5.1}]
Clearly, there exists $0<\varrho<R/8$, such that 
 \[ M(z_0,R) < \epsilon_1, \]
 for all $z_0\in Q(\varrho)$, with the same $R$ as in \eqref{V-E15}. Consequently, Lemma \ref{V-L4} can be strengthen as follows (we just repeat the above argument with $Q(z_0,R)$ instead of $Q(R)$)
 \begin{align*}
     \int_{Q(z_0,r)}|u -(u)_{z_0,r}|^{\frac{5}{2}} dz &\leq \frac{2^{\frac{5}{2}}}{\tau^{12}R^6}r^{5+2-1}\int_{Q(z_0,\tau R)}|u -(u)_{z_0,\tau R}|^{\frac{5}{2}} dz,\\
     &\leq \frac{2^{\frac{7}{2}}}{\tau^{12}R^6}r^{5+2-1}\int_{Q(2 R)}|u -(u)_{,2 R}|^{\frac{5}{2}} dz,
 \end{align*}
 for all $z_0\in Q(\varrho)$ and $\tau\leq \min\{1/(2c),1/4\}$. The conclusion follows from Campanato's type condition for parabolic H\"older continuity. Proposition \ref{Prop5.1} is then proved.  
\end{proof}
As a straightforward consequence of Proposition \ref{Prop5.1}, we have the following.
\begin{proposition}\label{Prop5.2}
There exists an absolute positive constant $\epsilon_0$ with the following property. Assume that $u$ is suitable weak solution to \eqref{Toy-NS1} in $Q\equiv B\times (-1,0)$ and satisfies the condition
\begin{equation}\label{V-E31}
    \int_{Q} |u|^3 dz < \epsilon_0.
\end{equation}
Then we have that $u$ is H\"older continuous in $\overline{Q(\varrho)}$ with $0<\varrho<1$.
\end{proposition}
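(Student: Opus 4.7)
The plan is to deduce the $L_{10/3}$-smallness hypothesis of Proposition~\ref{Prop5.1} from the $L_3$-smallness hypothesis assumed here on a slightly smaller sub-cylinder, and then to invoke Proposition~\ref{Prop5.1} after a scaling rearrangement.

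First I would fix a non-negative cut-off $\phi \in C^{\infty}_0(B\times(-1,1))$ with $\phi\equiv 1$ on $Q(\varrho_1)$ for some fixed $\varrho_1\in(0,1)$ (say $\varrho_1=7/8$) and with the standard derivative bounds $|\partial_t\phi|+|\Delta\phi|+|\nabla\phi|\le C$. Inserting $\phi$ into the local energy inequality of Definition~\ref{def2.1}, estimating the quadratic terms by H\"older against $|Q|^{1/3}\|u\|^2_{3,Q}$ and the cubic transport term $\int u\cdot\nabla\phi\,|u|^2\,dz$ directly by $C\|u\|^3_{3,Q}$, one obtains
\[
\mbox{ess}\sup_{-\varrho_1^2<t<0}\int_{B(\varrho_1)}|u(\cdot,t)|^2\,dx \;+\; \int_{Q(\varrho_1)}|\nabla u|^2\,dz \;\le\; C\bigl(\epsilon_0^{2/3} + \epsilon_0\bigr)\;\le\; C\epsilon_0^{2/3},
\]
provided $\epsilon_0\le 1$. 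Combining this with the interpolation $\|u(\cdot,t)\|_{L_{10/3}} \le \|u(\cdot,t)\|^{2/5}_{L_2}\|u(\cdot,t)\|^{3/5}_{L_6}$ and the $3$D Sobolev embedding $W^1_2\hookrightarrow L_6$ on $B(\varrho_1)$ yields
\[
\int_{Q(\varrho_1)}|u|^{10/3}\,dz \;\le\; C\,\Bigl(\mbox{ess}\sup_t\|u(\cdot,t)\|^{4/3}_{L_2(B(\varrho_1))}\Bigr)\!\int_{-\varrho_1^2}^0\!\!\bigl(\|u\|^2_{L_2} + \|\nabla u\|^2_{L_2}\bigr)\,dt \;\le\; C\epsilon_0^{10/9}.
\]

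Second, I would rescale. Setting $v(y,s) := \varrho_1\,u(\varrho_1 y,\varrho_1^2 s)$ for $(y,s)\in Q$, the scaling symmetry of \eqref{Toy-NS1} together with the scale-invariance of the local energy inequality ensure that $v$ is itself a suitable weak solution in $Q$, and a direct change of variables gives
\[
\int_Q |v|^{10/3}\,dy\,ds \;=\; \varrho_1^{-5/3}\int_{Q(\varrho_1)}|u|^{10/3}\,dz \;\le\; C\varrho_1^{-5/3}\epsilon_0^{10/9}.
\]
It then suffices to take the $\epsilon_0$ of the present proposition small enough (in terms of $\varrho_1$ and of the absolute constant of Proposition~\ref{Prop5.1}) that this right-hand side lies below the threshold of Proposition~\ref{Prop5.1}. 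That proposition yields H\"older continuity of $v$ on $\overline{Q(\varrho_*)}$ for some $\varrho_*\in(0,1)$, which, upon undoing the scaling, delivers H\"older continuity of $u$ on $\overline{Q(\varrho_1\varrho_*)}$, as required.

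The only non-routine step is the first one: the local energy inequality has to swallow both the quadratic and the supercritical cubic term using solely an $L_3$ bound on $u$, and this is precisely what yields the slightly-better-than-linear gain $\epsilon_0^{10/9}$ that beats the loss $\varrho_1^{-5/3}$ introduced by rescaling. The interpolation-Sobolev bound and the change of scale are then standard, which is why the author calls Proposition~\ref{Prop5.2} a straightforward consequence of Proposition~\ref{Prop5.1}.
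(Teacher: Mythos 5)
Your proposal is correct and is essentially the same argument the paper uses: bound the localized energy norm $|u|_{2,Q(\varrho_1)}$ via the local energy inequality in terms of the $L_3(Q)$-norm, use interpolation between $L_2$ and $L_6$ together with the Sobolev embedding to convert that into an $L_{10/3}$-smallness on the sub-cylinder, then rescale and invoke Proposition~\ref{Prop5.1}. The paper compresses all of this into the single inequality $\|u\|^2_{10/3,Q(1/2)}\le c|u|^2_{2,Q(1/2)}\le c[\int_Q|u|^3 + (\int_Q|u|^3)^{2/3}]$ (taking $\varrho_1=1/2$ instead of your $7/8$ and leaving the rescaling to the reader), but your more explicit write-out, including the scaling bookkeeping and the exponent arithmetic showing $\epsilon_0^{10/9}$ beats the $\varrho_1^{-5/3}$ loss, is a faithful expansion of the same idea.
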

\begin{proof}
 The proof is an easy consequence of the following estimate
 \[
 \|u\|^2_{10/3,Q(1/2)} \leq c|u|^2_{2,Q(1/2)} \leq c \left[ \int_Q|u|^3dz + \left( \int_Q|u|^3 \right)^{2/3} \right],
 \]
 and Proposition \ref{Prop5.1}.
\end{proof}
We turn now to the proof of Theorem \ref{Thm3.2}. The following scaled energy quantities will be needed
\begin{equation}\label{E3.39}
\begin{gathered}
A(r) := \sup_{t_0-r^2\leq t \leq t_0}\frac{1}{r} \int_{B(x_0,r)}|u(x,t)|^2 dx, \quad E(r):= \frac{1}{r}\int_{Q(z_0,r)}|\nabla u|^2 dz\\
C(r) := \frac{1}{r^2}\int_{Q(z_0,r)}|u|^3 dz
\end{gathered}
\end{equation}
Let us start first proving some auxiliary lemmata.
\begin{lemma}\label{V-L5}
For all $0<r\leq \varrho <1$,
\[ C(r) \leq c\left[ \left( \frac{r}{\varrho}\right)^3A^{3/2}(\varrho) + \left( \frac{\varrho}{r}\right)^3A^{3/4}(\varrho)E^{3/4}(\varrho) \right]. \]
\end{lemma}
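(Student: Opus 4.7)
The plan is the standard interpolation-of-$L^3$ argument, splitting $u$ around its spatial mean on $B(x_0,\varrho)$. Set $a(t):=(u(\cdot,t))_{x_0,\varrho}$ and $v:=u-a(t)$ on the cylinder $Q(z_0,\varrho)$. The pointwise inequality $|u|^3 \leq c(|v|^3+|a(t)|^3)$ yields
\[
C(r) \;\leq\; \frac{c}{r^2}\int_{Q(z_0,r)}|v|^3\,dz \;+\; \frac{c}{r^2}\int_{Q(z_0,r)}|a(t)|^3\,dz,
\]
and I will show that the second piece produces the $A^{3/2}$ contribution while the first produces the $A^{3/4}E^{3/4}$ contribution.

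For the mean piece, Cauchy--Schwarz yields
\[
|a(t)|^2 \;\leq\; \frac{1}{|B(x_0,\varrho)|}\int_{B(x_0,\varrho)}|u(\cdot,t)|^2\,dx \;\leq\; \frac{c}{\varrho^{2}}\,A(\varrho),
\]
hence $|a(t)|^3 \leq c\,\varrho^{-3}A(\varrho)^{3/2}$ uniformly in $t$. Integrating this pointwise bound over $Q(z_0,r)$ picks up a factor of $|B(r)|\cdot r^2 \sim r^{5}$, and after dividing by $r^2$ one recovers the first term $c\,(r/\varrho)^3 A(\varrho)^{3/2}$.

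For the deviation piece I first enlarge the domain using $Q(z_0,r)\subset Q(z_0,\varrho)$ (which holds because $r\leq\varrho$), and then apply a slicewise Gagliardo--Nirenberg inequality. For zero-mean functions on a three-dimensional ball, combining the scaling-invariant Sobolev embedding $H^{1}(B_\varrho)\hookrightarrow L^{6}(B_\varrho)$ with $L^{2}$--$L^{6}$ interpolation gives
\[
\|v(\cdot,t)\|_{L^{3}(B_\varrho)}^{3} \;\leq\; c\,\|v(\cdot,t)\|_{L^{2}(B_\varrho)}^{3/2}\,\|\nabla v(\cdot,t)\|_{L^{2}(B_\varrho)}^{3/2},
\]
with a constant independent of $\varrho$. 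The $L^{2}$ factor is bounded pointwise in $t$ by $(\varrho A(\varrho))^{3/4}$, while H\"older in time with exponents $(4,4/3)$ applied to the gradient factor converts the remaining integral of $\|\nabla v\|_{L^{2}}^{3/2}$ into a bound of order $\varrho^{1/2}(\varrho E(\varrho))^{3/4}$. Multiplying these contributions produces $\int_{Q(z_0,\varrho)}|v|^3\,dz \leq c\,\varrho^{2} A(\varrho)^{3/4}E(\varrho)^{3/4}$, and dividing by $r^2$ yields the second term (bounded above, since $r\leq\varrho$, by $c(\varrho/r)^{3}A(\varrho)^{3/4}E(\varrho)^{3/4}$).

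The one delicate point is keeping the Sobolev--Poincar\'e constant scaling-invariant, so that the final expression collapses cleanly into the advertised combination of the scale-invariant quantities $A(\varrho)$ and $E(\varrho)$; this is painless in three dimensions because the critical embedding $H^{1}\hookrightarrow L^{6}$ has a dimensionless constant on balls once the spatial mean is subtracted. Everything else is routine bookkeeping: the whole argument is just a Nirenberg-type interpolation of $L^{3}$ between the energy norm controlled by $A$ and the dissipation controlled by $E$, with the mean part carrying the worst scaling.
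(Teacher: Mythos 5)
Your proof is correct and reaches the stated estimate (in fact with a slightly sharper exponent of $(\varrho/r)^2$ in the second term, which you then round up to $(\varrho/r)^3$ using $r\le\varrho$). The route you take differs from the paper's in one substantive respect: you decompose the velocity field itself, $u=v+a(t)$ with $a(t)$ the spatial mean of $u$ on $B(x_0,\varrho)$, and apply the zero-mean Gagliardo--Nirenberg interpolation $\|v\|_{L^3}\le c\,\|v\|_{L^2}^{1/2}\|\nabla v\|_{L^2}^{1/2}$ directly, with the constant piece $a(t)$ handled by a trivial Jensen bound. The paper instead subtracts the mean of the scalar $|u|^2$, applies Poincar\'e to $|u|^2-[|u|^2]_{,\varrho}$ to produce a slicewise estimate for $\int_{B(r)}|u|^2$, and then feeds that into a non-zero-mean $L^2$--$L^3$ interpolation on a ball in which the $r^{-3/2}(\int|u|^2)^{3/2}$ correction term absorbs the missing mean. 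Both arguments are Sobolev interpolations that trade a derivative for an improved integrability exponent and keep the constants dimensionless via scaling; yours is the more standard Caffarelli--Kohn--Nirenberg/Lin-style decomposition and is a bit cleaner, while the paper's is closer in spirit to its own later treatment of the cubic term in Lemma~\ref{V-L6}, which also subtracts the mean of $|u|^2$ rather than of $u$. One minor phrasing point: the dimensionless constant in your Sobolev step really comes from the Poincar\'e--Sobolev inequality for zero-mean functions, $\|v-\bar v\|_{L^6(B_\varrho)}\le c\,\|\nabla v\|_{L^2(B_\varrho)}$ with $c$ independent of $\varrho$ by scaling, rather than from the full embedding $H^1\hookrightarrow L^6$ (whose constant would degenerate as $\varrho\to 0$ without the mean subtraction); you gesture at this but it could be stated more precisely.
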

\begin{proof}
We have
\begin{align*}
\int_{B(r)}|u|^2 dx &= \int_{B(r)}\left( |u|^2 - [|u|^2]_{,\varrho} \right)dx + \left(\frac{r}{\varrho}\right)^3\int_{B(\varrho)}|u|^2 dx\\
&\leq \int_{B(\varrho)}\left| |u|^2 - [|u|^2]_{,\varrho} \right|dx + \left(\frac{r}{\varrho}\right)^3\int_{B(\varrho)}|u|^2 dx.
\end{align*}
By Poincar\'e's inequality on the ball, we have 
\[ \int_{B(\varrho)}\left| |u|^2 - [|u|^2]_{,\varrho} \right|dx \leq c \int_{B(\varrho)} |\nabla u| |u| dx, \]
(where $c$, as usual, is an absolute positive constant). Therefore, we get
\begin{equation}\label{V-E33}
\begin{split}
\int_{B(r)}|u|^2 dx &\leq c\varrho\left( \int_{B(\varrho)} |\nabla u|^2dx \right)^{1/2}\left( \int_{B(\varrho)}|u|^2\right)^{1/2} + \left(\frac{r}{\varrho}\right)^3\int_{B(\varrho)}|u|^2 dx\\
&\leq c\varrho^{3/2}A^{1/2}(\varrho)\left( \int_{B(\varrho)} |\nabla u|^2dx \right)^{1/2} + \left(\frac{r}{\varrho}\right)^3 \varrho A(\varrho).
\end{split}
\end{equation}
By interpolation inequality (and Sobolev embedding and Poincar\'e's inequality on the ball), we obtain that
\begin{align*}
\int_{B(r)}|u|^3 dx &\leq c\left[ \left(\int_{B(r)}|\nabla u|^2 dx\right)^{3/4}\left(\int_{B(r)}|u|^2\right)^{3/4} + \frac{1}{r^{3/2}}\left(\int_{B(r)}|u|^2 dx\right)^{3/2} \right]\\
&\leq c\left\{ \varrho^{3/4}A^{3/4}(\varrho)\left(\int_{B(r)}|\nabla u|^2 dx\right)^{3/4} + \frac{1}{r^{3/2}}\left[ c\varrho^{3/2}A^{1/2}(\varrho)\left( \int_{B(\varrho)} |\nabla u|^2dx \right)^{1/2}+\right.\right.\\ &\mathrel{\phantom{=}} + \left.\left. \left(\frac{r}{\varrho}\right)^3 \varrho A(\varrho) \right]^{3/2} \right\}\\
&\leq c\left\{ \left(\frac{r}{\varrho}\right)^3A^{3/2}(\varrho) + \left( \int_{B(\varrho)} |\nabla u|^2dx \right)^{3/4}\left[\varrho^{3/4}+\frac{\varrho^{9/4}}{r^{3/2}}\right]A^{3/4}(\varrho) \right\}.
\end{align*}
Integrating the latter inequality in $t$ on $(t_0-r^2,t_0)$, we get 
\begin{align*}
\int_{Q(r)}|u|^3 dz &\leq c \left\{ r^2\left(\frac{r}{\varrho}\right)^3A^{3/2}(\varrho) + \left[\varrho^{3/4}+\frac{\varrho^{9/4}}{r^{3/2}}\right]A^{3/4}(\varrho)r^{1/2}\left( \int_{Q(\varrho)} |\nabla u|^2dx \right)^{3/4}  \right\}\\
&\leq c \left\{ r^2\left(\frac{r}{\varrho}\right)^3A^{3/2}(\varrho) + \left[\varrho^{3/4}+\frac{\varrho^{9/4}}{r^{3/2}}\right]A^{3/4}(\varrho)r^{1/2}E^{3/4}(\varrho)\varrho^{3/4}  \right\}.
\end{align*}
Noticing that 
\[ \left[\varrho^{3/4}+\frac{\varrho^{9/4}}{r^{3/2}}\right]r^{1/2}\varrho^{3/4} = \left[ \left(\frac{\varrho}{r}\right)^{3/2}+\left(\frac{\varrho}{r}\right)^3 \right]r^2 \leq 2\left(\frac{\varrho}{r}\right)^3r^2, \]
we have that the proof is completed.
\end{proof}
\begin{lemma}\label{V-L6}
For any $0<R<1$, 
\[ A(R/2) + E(R/2) \leq c \left[ C^{2/3}(R) + A^{1/2}(R)C^{1/3}(R)E^{1/2}(R) \right] \].
\end{lemma}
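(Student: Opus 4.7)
The plan is to apply the local energy inequality from Definition~\ref{def2.1} with a standard parabolic cutoff, then reduce to Hölder/interpolation estimates for $\int_{Q(R)}|u|^2$ and $\int_{Q(R)}|u|^3$ expressed in terms of the scale-invariant quantities $A(R)$, $E(R)$, $C(R)$ defined in \eqref{E3.39}.

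First I would choose $\phi = \psi^2\chi^2$ with $\psi \in C^\infty_0(B(R))$, $\psi\equiv 1$ on $B(R/2)$, $|\nabla\psi|\leq c/R$, $|\Delta\psi|\leq c/R^2$, together with a time cutoff $\chi\equiv 1$ on $[-(R/2)^2,0]$ supported in $(-R^2,0]$ with $|\chi'|\leq c/R^2$. Substituting this $\phi$ into the local energy inequality of Definition~\ref{def2.1}, taking the supremum over $t\in(-(R/2)^2,0)$, and dividing through by $R$ yields
\[
A(R/2)+E(R/2) \leq \frac{c}{R^{3}}\int_{Q(R)}|u|^{2}\,dz + \frac{c}{R^{2}}\int_{Q(R)}|u|^{3}\,dz.
\]
The quadratic term is immediately bounded by $cC^{2/3}(R)$ via Hölder on the parabolic cylinder: $\int|u|^2 \leq |Q(R)|^{1/3}(\int|u|^3)^{2/3}$, using $|Q(R)|\simeq R^5$ and the definition of $C(R)$.

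For the cubic term I would split $u = \tilde u + \bar u$, where $\bar u(t) := [u(\cdot,t)]_{,R}$ is the spatial mean, giving the pointwise bound $|u|^2 \leq 2(|\tilde u|^2 + |\bar u|^2)$. The $|\tilde u|^2$ contribution is treated by the chain of estimates: Hölder with the pair $(L^\infty_t L^2_x, L^1_t L^2_x)$ on $\int|u|\cdot|\tilde u|^2$; the spatial interpolation $\|\tilde u\|_{L^4}^2 \leq c\|\tilde u\|_{L^3}\|\tilde u\|_{L^6}$ pointwise in time; Cauchy-Schwarz in time; the time-Hölder bound $\int\|\tilde u\|_3^2\,dt \leq cR^2 C^{2/3}(R)$; and, crucially, the mean-free Poincaré-Sobolev estimate $\|\tilde u\|_{L^6(B(R))} \leq c\|\nabla u\|_{L^2(B(R))}$ (no $R^{-1}\|u\|_2$ correction, since $\tilde u$ has zero average). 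Combining these gives the target $\int|u||\tilde u|^2 \leq cR^2 A^{1/2}(R)C^{1/3}(R)E^{1/2}(R)$. The mean contribution $\int|u||\bar u|^2$ is bounded using Jensen's inequality $|\bar u|^2\leq cR^{-3}\|u\|_{L^2}^2$ together with $\int_{Q(R)}|u|\,dz \leq c|Q(R)|^{2/3}(\int|u|^3)^{1/3}$, producing a term of order $R^{2}A(R)C^{1/3}(R)$, which by Young's inequality is dominated by $cR^{2}[C^{2/3}(R) + A^{1/2}(R)C^{1/3}(R)E^{1/2}(R)]$ in any regime where $A(R)$ is controlled by $A(R/2)+E(R/2)$-style quantities.

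The main obstacle I anticipate is the sharpness needed in the cubic estimate: a direct Sobolev embedding $\|u\|_{L^6(B(R))} \leq c(\|\nabla u\|_{L^2} + R^{-1}\|u\|_{L^2})$ would contaminate the final inequality with a factor of $(A+E)^{1/2}$ rather than $E^{1/2}$, destroying the clean form of the statement. The mean-subtraction step is the decisive technical ingredient, as it is the only way to invoke Poincaré-Sobolev without an $R^{-1}\|u\|_{L^2}$ boundary correction and thereby extract the isolated $E^{1/2}$ factor.
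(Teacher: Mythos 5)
Your reduction to the local energy inequality, and your treatment of the first two right-hand terms, match the paper's strategy. The quadratic bound $\frac{1}{R^3}\int_{Q(R)}|u|^2 \leq cC^{2/3}(R)$ is fine, and your chain for the mean-free piece $\int|u||\tilde u|^2$ (interpolation $\|\tilde u\|_4^2 \leq c\|\tilde u\|_3\|\tilde u\|_6$, mean-free Poincar\'e--Sobolev, Cauchy--Schwarz in time) correctly produces $cR^2 A^{1/2}(R)C^{1/3}(R)E^{1/2}(R)$. Your observation about why mean subtraction is indispensable --- to avoid the $R^{-1}\|u\|_2$ correction in Sobolev --- is also correct.

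The gap is in the mean contribution. You bound $\int_{Q(R)}|u||\bar u|^2\,dz$ by $cR^2 A(R)C^{1/3}(R)$ and then claim this is dominated by $cR^2[C^{2/3}+A^{1/2}C^{1/3}E^{1/2}]$ ``in any regime where $A(R)$ is controlled.'' That qualification is not available: the lemma is invoked in the proof of Theorem~\ref{Thm3.2} precisely as an \emph{unconditional} inequality to be iterated, before any control on $A$ has been established, so a term $A(R)C^{1/3}(R)$ on the right is genuinely worse than the stated bound (take $E$ small with $A,C=O(1)$ and the claimed domination fails). No application of Young's inequality converts $AC^{1/3}$ into $C^{2/3}+A^{1/2}C^{1/3}E^{1/2}$. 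The root cause is that you take absolute values too early: by replacing $u\cdot\nabla\phi$ with $|u|/R$ before splitting $|u|^2$, you destroy the divergence structure, and the constant-in-$x$ piece $|\bar u|^2$ then contributes a term with no gradient and hence no way to produce $E^{1/2}$.

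The paper sidesteps this by subtracting the mean of $|u|^2$ rather than of $u$: it writes $|u|^2=(|u|^2-[|u|^2]_{,R})+[|u|^2]_{,R}$ inside $\int u\cdot\nabla\phi\,|u|^2$, keeps the $u\cdot\nabla\phi$ factor intact on the mean part, and integrates by parts so that $[|u|^2]_{,R}\int u\cdot\nabla\phi\,dx = -[|u|^2]_{,R}\int(\operatorname{div}u)\phi\,dx$. The divergence puts a $\nabla u$ into the mean term, which is what yields the missing $E^{1/2}$ and gives $cA^{1/2}(R)C^{1/3}(R)E^{1/2}(R)$ for that piece as well. Your decomposition $u=\tilde u+\bar u$ could in principle be repaired by the same device --- retain $u\cdot\nabla\phi$, expand $|u|^2 = |\tilde u|^2 + 2\tilde u\cdot\bar u + |\bar u|^2$ exactly, and integrate by parts on the terms involving $\bar u$ to trade $\nabla\phi$ for $\operatorname{div}u$ --- but as written the proposal does not do this and the estimate does not close.
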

\begin{proof}
Picking up a suitable cut-off function in the energy inequality (see Definition \ref{def2.1}), we get the following estimates
\begin{multline}
A(R/2) + E(R/2) \leq c\left[ \frac{1}{R^3} \int_{Q(R)}|u|^2 dz + \frac{1}{R^2}\int_{Q(R)} \left| |u|^2-[|u|^2]_{,R}\right||u|dz\right.\\ + \left.\int_{-R^2}^0 [|u|^2]_{,R}\int_{B(R)}\frac{1}{R}|\nabla u|dx dt. \right]
\end{multline}
First, let us notice that 
\[ \frac{1}{R^3}\int_{Q(R)}|u|^2 dz \leq c C^{2/3}(R); \]
next, 
\begin{align*}
\int_{-R^2}^0 [|u|^2]_{,R}\int_{B(R)}\frac{1}{R}|\nabla u|dx dt &= c \int_{-R^2}^0 \left(\frac{1}{R^3}\int_{B(R)}|u|^2dx\right)^{1/2}\left(\frac{1}{R^3}\int_{B(R)}|u|^2dx\right)^{1/2} \left( \frac{1}{R}\int_{B(R)}|\nabla u|dx\right) dt\\
&\leq c\frac{A(R)^{\frac{1}{2}}}{R}\left(\frac{1}{R^3}\int_{Q(R)}|u|^2dz\right)^{1/2}\left(\int_{-R^2}^0\left( \frac{1}{R}\int_{B(R)}|\nabla u|dx\right)^2 dt\right)^{1/2}\\
&\leq c\frac{A(R)^{\frac{1}{2}}}{R} C^{1/3}(R) R E^{1/2}(R)\\
&\leq c A(R)^{1/2}C^{1/3}(R) E^{1/2}(R).
\end{align*}
We dealt with the last term as follows
\begin{align*}
\int_{Q(R)}\left| |u|^2-[|u|^2]_{,R}\right||u|dz &\leq \int_{-R^2}^0 \left(\int_{B(R)}\left| |u|^2-[|u|^2]_{,R}\right|^{3/2}\right)^{2/3}\left( \int_{B(R)}|u|^3\right)^{1/3}\\
&\leq c\int_{-R^2}^0 \left( \int_{B(R)}|\nabla u|^2dx\right)^{1/2}\left( \int_{B(R)}|u|^2 dx\right)^{1/2}\left( \int_{B(R)}|u|^3dx\right)^{1/3} dt\\
&\leq c R^{1/2}A^{1/2}(R) \left( \int_{Q(R)}|u|^3dz\right)^{1/3}\left(\int_{-R^2}^0\left( \int_{B(R)}|\nabla u|^2 dx\right)^{3/4}dt\right)^{2/3}\\
&\leq c R^{1/2+2/3}A^{1/2}(R)C^{1/3}(R)R^{1/3}\left( \int_{Q(R)}|\nabla u|^2dz\right)^{1/2}\\
&\leq c R^2 A^{1/2}(R)C^{1/3}(R) E^{1/2}(R),
\end{align*}
which concludes the proof.
\end{proof}
\begin{proof}[Proof of Theorem~\ref{Thm3.2}]
It follows from Lemma~\ref{V-L5} and the assumptions of Theorem~\ref{Thm3.2} that:
\begin{equation}\label{V-E35}
C(r) \leq c \left[ \left(\frac{\varrho}{r}\right)^3 A^{3/4}(\varrho)\epsilon^{3/4} + \left(\frac{r}{\varrho}\right)^3 A^{3/2}(\varrho) \right]
\end{equation}
Introducing, the new quantity 
\[ \mathcal{E}(r) := A^{3/2}(r),\]
we derive from Lemma~\ref{V-L6}
\begin{equation}\label{V-E36}
\mathcal{E}(r) \leq \left[ C(2r) + A^{3/4}(2r)C^{1/2}(2r)\epsilon^{3/4} \right].
\end{equation}
Now let us assume that $0<r\leq \varrho/2<\varrho\leq 1$. Replacing $r$ with $2r$ in \eqref{V-E35}, we can reduce \eqref{V-E36} to the form
\begin{align*}
\mathcal{E}(r) &\leq c\left[ \left(\frac{\varrho}{r}\right)^3 A^{3/4}(\varrho)\epsilon^{3/4} + \left(\frac{r}{\varrho}\right)^3 A^{3/2}(\varrho) + A^{3/4}(2r)\left( \left(\frac{\varrho}{r}\right)^3 A^{3/4}(\varrho)\epsilon^{3/4} + \left(\frac{r}{\varrho}\right)^3 A^{3/2}(\varrho) \right)^{1/2}\epsilon^{3/4} \right]\\
&\leq c\left[ \left(\frac{\varrho}{r}\right)^3 A^{3/4}(\varrho)\epsilon^{3/4} + \left(\frac{r}{\varrho}\right)^3 A^{3/2}(\varrho) + \left(\frac{\varrho}{r}\right)^{3/2+3/4}A^{3/4+3/8}(\varrho)\epsilon^{3/4+3/8}\right.\\&\mathrel{\phantom{=}} + \left.\left(\frac{\varrho}{r}\right)^{3/4}A^{3/4}(\varrho)\left(\frac{r}{\varrho}\right)^{3/2}A^{3/4}(\varrho)\epsilon^{3/4}\right].
\end{align*}
Here, the obvious inequality $A(2 r) \leq c\varrho A(\varrho)/r$ has been used. Applying Young's inequality with an arbitrary positive constant $\delta$, we show that 
\[ \mathcal{E}(r) \leq c\left(\frac{r}{\varrho}\right)^{3/4}(\epsilon^{3/4}+1)\mathcal{E}(\varrho) + c\delta \mathcal{E}(\varrho) + c(\delta)\left( \epsilon^{3/2}\left(\frac{\varrho}{r}\right)^6 + \left(\frac{\varrho}{r}\right)^9\epsilon^{9/2}\right). \]
Therefore,
\begin{equation}\label{V-E37}
\mathcal{E}(r) \leq c \left[ \left(\frac{r}{\varrho}\right)^{3/4}(\epsilon^{3/4}+1) + \delta \right]\mathcal{E}(\varrho) + c(\delta)\left(\frac{\varrho}{r}\right)^9(\epsilon^{3/2}+\epsilon^{9/2}).
\end{equation}
Inequality \eqref{V-E37} holds for $r\leq \varrho/2$ and can be rewritten as follows:
\begin{equation}\label{V-E38}
\mathcal{E}(\vartheta \varrho) \leq c \left[ \vartheta^{3/4}(\epsilon^{3/4}+1) + \delta \right]\mathcal{E}(\varrho) + c(\delta)\vartheta^{-9}(\epsilon^{3/2}+\epsilon^{9/2})
\end{equation}
for any $0<\vartheta\leq 1/2$ and any $0<\varrho\leq 1$.\\
Now, assuming that $\epsilon \leq 1$, let us fix $\vartheta$ and $\delta$ such that 
\begin{equation}\label{V-E39}
2c\vartheta^{1/4} < 1/2,\quad 0<\vartheta \leq 1/2,\quad c\delta < \vartheta^{1/2}/2.
\end{equation}
Obviously, $\vartheta$ and $\delta$ are independent of $\epsilon$. So,
\begin{equation}\label{V-E40}
\mathcal{E}(\vartheta \varrho) \leq \vartheta^{1/2}\mathcal{E}(\varrho) + G
\end{equation}
for any $0<\varrho\leq 1$, where $G=G(\epsilon) \to 0$ as $\epsilon\to 0^+$.\\
Iterating \eqref{V-E40}, we obtain
\begin{equation}\label{V-E41}
\mathcal{E}(\vartheta^k \varrho) \leq \vartheta^{k/2}\mathcal{E}(\varrho) + cG,
\end{equation}
for any natural number $k$ and any $0<\varrho\leq 1$. Letting $\varrho = 1$, we obtain that
\begin{equation}\label{V-E42}
\mathcal{E}(\vartheta^k) \leq \vartheta^{k/2}\mathcal{E}(1) + cG,
\end{equation}
for the same values of $k$. Hence, it can be easily deduced from \eqref{V-E42}, that 
\begin{equation}\label{V-E43}
\mathcal{E}(r) \leq c\left(r^{1/2}\mathcal{E}(1) + G(\epsilon)\right),
\end{equation}
for all $0<r\leq 1/2$. Now, \eqref{V-E35} implies, for $0<r\leq 1/4$
\begin{align*}
C(r) &\leq c \left[\mathcal{E}^{1/2}(2r)\epsilon^{3/4} + \mathcal{E}(2r)\right]\\
&\leq c\left[ r^{1/4}\mathcal{E}^{1/2}\epsilon^{3/4} + G(\epsilon)^{1/2}\epsilon^{3/4} + r^{1/2}\mathcal{E}(1) + G(\epsilon) \right].
\end{align*}
Choosing $\epsilon$ sufficiently small and $r_0$ also sufficiently small, we obtain that
\[ C(r_0) < \epsilon_0, \]
where $\epsilon_0$ is as in Proposition \ref{Prop5.2}. Since $u$ is suitable weak solution in $Q(r_0)$, Proposition \ref{Prop5.2} and the scaling symmetry of our system yield the required statement and the estimate holds for the case $k=l=0$; the other cases can be obtained by a straightforward bootstrap argument. Thus, Theorem~\ref{Thm3.2} is proved.
\end{proof}
\section{Proof of Theorem~\ref{Thm3.3}}
Let $\chi \in C^{\infty}_0(-1,1)$ and $\phi\in C^{\infty}_0(B)$ be two cut-off functions such that $0\leq \chi,\phi\leq 1$, $\chi \equiv 1$ in $(-(1/2)^2,(1/2)^2)$ and $\chi \equiv 0$ in $(-1,1)\setminus (-(3/4)^2,(3/4)^2)$. And, similarly $\phi \equiv 1$ in $B(1/2)$ and $\phi \equiv 0$ in $B\setminus B(3/4)$. Now, set for simplicity $\psi(x,t) := \chi(t)\phi(x)$ and introduce the functions $v_i := u_i \psi$ and $F_i := -(u\cdot \nabla u_i + u_i \dvg u)\psi - (2 \nabla u_i\cdot \nabla \psi + u_i \Delta \psi) + u_i \partial_t \psi$ (with $i=1,2,3$). We have, at least in the sense of distributions in $Q_+$ that
\[  \partial_t v_i - \Delta v_i = F_i, \]
with $F_i \in L_2(Q_+)$. We define 
\[
\bar{F}_i(x,t) := \begin{cases}
F_i(x_1,x_2,x_3,t) \mbox{ in }\{ x_3 > 0 \}\\
-F_i(x_1,x_2,-x_3,t) \mbox{ in }\{ x_3 \leq 0 \}
\end{cases}
\]
the odd extension of $F_i$ and consider the initial boundary value problem
\[
\begin{cases}
\partial_t \bar{v}_i - \Delta \bar{v}_i = \bar{F}_i \mbox{ in } Q\\
\bar{v}_i|_{\partial'Q} = 0.
\end{cases}
\]
Standard parabolic theory insure the existence of a unique solution $\bar{v}_i$ that satisfies the following estimate
\[
\|\bar{v}_i\|_{W^{2,1}_2(Q)} \leq c \|\bar{F}_i\|_{L_2(Q)},
\]
where $c$ is an absolute positive constant. This uniqueness of $\bar{v}_i$ together with the parity of $\bar{F}_i$ ensure us that $\bar{v}_i$ is also odd. From this, we deduce without too much difficulty that 
\begin{equation}\label{VI-E44}
v_i = \bar{v}_i|_{\{x_3 > 0\}},
\end{equation}
which gives us $v_i \in W^{2,1}_2(Q_+)$ and by embedding $v_i \in W^{1,0}_{p_1}(Q_+)$ where \[p_1 = \frac{5\times 2}{5 - 2} >2; \]
thus we have that $F_i \in L_{p_1}(Q_+(1/2))$. Starting again the above machinery, but this time with $F_i \in L_{p_1}(Q_+(1/2))$ (the cut-off functions have to be changed into a suitable manner), and iterating, we end up with 
\[ F_i \in W^{1,0}_p(Q_+(a_p)), \]
with $0<a_p<1$ and for all $p\in [1,\infty)$. Going back to the equation of $\bar{v}_i$, we have at least in the sense of distribution that 
\[ 
\partial_t \bar{v}_{i,j} - \Delta \bar{v}_{i,j} = \bar{F}_{i,j}
\]
where $j=1,2,3$. But we have, if we fix $p>5/2$, that $\bar{F}_{i,j} \in L_p(Q(a))$, and by standard parabolic theory we obtain that $\bar{v}_{i,j} \in C^{\alpha,\alpha/2}(Q(a/2))$ with $\alpha = 2 -5/p >0$. Next, using \eqref{VI-E44}, we get that 
\[ 
v_{i,j} \in C^{\alpha,\alpha/2}(Q_+(a/2)),
\]
for $i,j=1,2,3$ and $0<a<1$. Next, let us notice that by choosing appropriately the cut-off functions, we have now that $F_i \in C^{\alpha,\alpha/2}(Q(a/4))$; and therefore, we get that (even if it means to change the domain on which we solve the equation of $\bar{v}_i$ into $Q(a/4)$):
\[ \bar{v}_i \in C^{\alpha+2,\alpha/2+1}(Q(a/4)), \]
thanks to Schauder's estimates. Using once more \eqref{VI-E44}, we obtain that 
\[ v_i \in C^{\alpha+2,\alpha/2+1}(Q_+(a/4))\quad \mbox{and similarly as before } F_{i,j} \in C^{\alpha,\alpha/2}(Q(a/8)) \]
Repeating this process, we have that the theorem is proved.
\section{Proof of Theorem~\ref{Thm3.4}}
Before continuing our development, let us point out that because of Theorem~\ref{Thm3.2} i.e. our version of the Caffarelli-Kohn-Nirenberg, we have that if $u$ admits singular points those points should necessary belong to the set $\{0\}\times (-1,0)$ (this is due to the fact that the $1$-dimensional parabolic Hausdorff measure of the set of singular points of $u$ in $Q$ is equal to zero; we have the same consequence for our model by following line by line the proof of that statement in \cite{Caff82}).\\
We recall that a point $z_0$ is a regular point of $u$ if there exists $\varrho >0$ such that $u$ is H\"older continuous in $Q(z_0,\varrho)$. And a point $z_0$ is a singular point if it is not a regular one.\\
Assume now that $z_{t_0}=(0,t_0)$ (with $-1<t_0<0$) is a singular point of $u$. Making use again of Theorem~\ref{Thm3.2}, we may construct, upon use of a space-time shift and using the natural scaling of \eqref{Toy-NS1} (i.e. $u^{\lambda}(x,t) := \lambda u(\lambda x,\lambda^2 t)$ with $\lambda>0$ is also a solution to \eqref{Toy-NS1} if $u$ is) a function $\tilde{u}$ with the following properties:
\begin{enumerate}
    \item $\tilde{u} \in L_{2,\infty}(Q)\cap W^{1,0}_2(Q)$ and obey \eqref{Toy-NS1} in $Q$ in the sense of distribution;
    \item $\tilde{u}\in L_{\infty}(B\times (-1,-a^2))$ for all $a\in (0,1)$;
    \item for all $r_1\in(0,1)$ such that $\tilde{u}\in L_{\infty}(\{r_1<|x|<1\}\times (-1,0))$;
    \item $\tilde{u}(x,t) = -\tilde{v}(|x|,t)x$.
\end{enumerate}
To see the previous assertion, we proceed in the following manner. Because of the observations made at the begin of this section, we have that there exists $r>0$ such that the first three points hold true in $Q(z_{t_0},r)$. Define now $\tilde{u}(x,t) = r u(r x,t_0 + r^2 t)$ (with $(x,t)\in Q$) and we steadily get that the new function satisfies all the above points. Moreover the origin $z=0$ is a singular point of $\tilde{u}$.\\
Next, we have the following proposition.
\begin{proposition}
The solution $\tilde{u}$ constructed above has the following property
    \[ 
    \sup_{z\in \overline{Q(1/2)}}|x|^{2/3}|u(x,t)| < \infty. 
    \]
\end{proposition}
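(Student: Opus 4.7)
The plan is to exploit the radial symmetry of $\tilde u$ to reduce \eqref{Toy-NS1} to a scalar parabolic equation in the radial variable, and then obtain the pointwise bound via parabolic comparison with power-type super- and sub-solutions that reflect the algebraic structure of the nonlinearity. Writing $\tilde u(x,t)=-v(|x|,t)x$ and using $\Delta\bigl(v(r)x_j\bigr)=(v''+\tfrac{4}{r}v')x_j$ together with the pointwise identity $u\cdot\nabla u_j+\tfrac{1}{2}u_j\,\dvg u=\bigl(\tfrac{3}{2}rvv'+\tfrac{5}{2}v^2\bigr)x_j$, equation \eqref{Toy-NS1} becomes the scalar equation
\[ \partial_t v-v''-\tfrac{4}{r}v'-\tfrac{3}{2}rvv'-\tfrac{5}{2}v^2=0,\qquad (r,t)\in(0,1)\times(-1,0), \]
whose linear part $\partial_r^2+(4/r)\partial_r$ is the radial Laplacian on $\R^5$. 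The target bound $|x|^{2/3}|\tilde u(x,t)|\leq C$ is equivalent to $|v(r,t)|\leq Cr^{-5/3}$.

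Next, I would look for stationary super-solutions of power form $\phi_A(r):=Ar^{-\beta}$ with $A>0$. A direct substitution gives
\[ -\phi_A''-\tfrac{4}{r}\phi_A'-\tfrac{3}{2}r\phi_A\phi_A'-\tfrac{5}{2}\phi_A^2=\beta(3-\beta)\,A\,r^{-\beta-2}+\tfrac{3\beta-5}{2}\,A^2\,r^{-2\beta}, \]
which is non-negative for all $r>0$ precisely when $\beta\in[5/3,3]$. At the extremal value $\beta=5/3$ the quadratic coefficient $(3\beta-5)/2$ vanishes, making $\phi_A(r)=Ar^{-5/3}$ the slowest-blowing super-solution of the family; an analogous calculation shows that $-\phi_A$ is a sub-solution, again thanks to the same cancellation at $\beta=5/3$. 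This algebraic cancellation, and the fact that $\beta=5/3$ is the unique exponent realising it, is the essential reason behind the appearance of $2/3$ in the target estimate.

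I would then apply the parabolic comparison principle on the truncated cylinder $\{\epsilon<r<1\}\times(-3/4,0)$. On the outer boundary $r=1$ and on the initial slice $t=-3/4$, properties (2) and (3) of $\tilde u$ give $|v(1,t)|\leq M_1$ and $|v(r,-3/4)|\leq M_2/r$, both dominated by $\phi_A$ for any $A\geq\max(M_1,M_2)$. At the inner boundary $r=\epsilon$, one invokes the smoothness of $\tilde u$ on $\{|x|>\epsilon\}\times(-1,0)$ and a Strauss-type radial Sobolev embedding $H^1_{\mathrm{rad}}(\R^5)\hookrightarrow L^\infty_{\mathrm{loc}}$ to control $v(\epsilon,t)$ in terms of the energy and dissipation of $\tilde u$. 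Comparing from above with $\phi_A$ and from below with $-\phi_A$, then passing $\epsilon\to0$, yields the required two-sided bound $|v(r,t)|\leq Ar^{-5/3}$ on $\overline{Q(1/2)}$.

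The main obstacle is precisely the passage $\epsilon\to0$: the Strauss estimate only delivers the scale-critical bound $|v(r,t)|\leq Cr^{-2}$, which exceeds $\phi_A(r)=Ar^{-5/3}$ near the origin, so a direct comparison at $r=\epsilon$ does not close. The natural workaround is to work first with the enlarged super-solution $\phi_{A,\delta}(r):=Ar^{-5/3}+\delta r^{-2}$, which remains a super-solution for $\delta\in(0,4)$ since all cross-terms have the favourable sign at the cancellation exponent $\beta=5/3$, and then iteratively shrink $\delta$. The geometric contraction of $\delta$ across successive scales is driven by the strict super-solution margin $(20A/9)r^{-11/3}>0$ at $\beta=5/3$ together with the absolute continuity $\int_{B(\epsilon)}|\tilde u|^{10/3}\,dz\to0$ as $\epsilon\to0$. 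This controlled iteration, anchored on the critical algebraic cancellation at $\beta=5/3$, is the delicate heart of the argument.
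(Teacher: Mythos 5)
Your reduction to the scalar radial equation and the identification of the cancellation at $\beta=5/3$ are correct and are, in substance, the same observation the paper exploits: setting $v^{(1)}:=r^{5/3}v$ kills the quadratic zeroth-order term in the resulting equation and leaves a positive linear coefficient $\frac{20}{9}r^{-2}$, which is exactly your super/subsolution margin. The gap, however, is real and lies exactly where you flag it. You run the comparison on $\{\varepsilon<r<1\}\times(-3/4,0)$, a cylinder that includes the potentially singular time $t=0$, so $v$ need not be bounded near $r=0$ on that set. The Strauss estimate does not rescue you: with only $u\in L_{2,\infty}\cap W^{1,0}_2$, the norm $\|u(\cdot,t)\|_{H^1}$ is merely square-integrable in $t$, not essentially bounded, so the pointwise bound $|v(\varepsilon,t)|\le C\varepsilon^{-2}$ cannot be taken uniform in $t$. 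And the iterative shrinking of $\delta$ in $\phi_{A,\delta}$ is asserted, not constructed; nothing in the proposal actually forces $\delta\to 0$.

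The missing move is to truncate in time as well, which is what the paper does. By property (2) of $\tilde u$, for each $a>0$ one has $|v(r,t)|\le C(a)/r$ on $(0,1)\times(-1,-a^2)$; hence at the inner boundary $\varepsilon^{5/3}|v(\varepsilon,t)|\le C(a)\varepsilon^{2/3}\to 0$ as $\varepsilon\to 0$, so your barrier $Ar^{-5/3}$ already dominates there for small $\varepsilon$ with no $\delta$-correction at all. Running the comparison (equivalently, the weak maximum principle for $v^{(1)}$) on $[\varepsilon,1/2]\times[-1/4,-a^2]$, sending $\varepsilon\to 0$ first and then $a\to 0$, closes the argument cleanly; the outer boundary $r=1/2$ and the initial slice $t=-1/4$ are controlled uniformly in $a$ and $\varepsilon$ by properties (2) and (3). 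With that double truncation your $\phi_A$ alone suffices and the auxiliary term $\delta r^{-2}$ and its iteration become unnecessary.
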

A straightforward consequence of the previous proposition is that the origin is actually a regular point for $\tilde{u}$. Indeed, one can steadily show that $\tilde{u}\in L_{3,\infty}(Q(1/2))$ which is necessary condition for regularity. We will present the proof of this in a forthcoming paper; the proof essentially relies on an application of the backward uniqueness and unique continuation (introduced in \cite{Esc03}) to the system \eqref{Toy-NS1}. $z=0$ being a regular point of $\tilde{u}$ is a contradiction. Consequently, we have that $z=(0,t_0)$ is a regular point and this conclude the proof of Theorem~\ref{Thm3.4}. The only thing left is to prove the proposition.
\begin{proof}
 For simplicity, we drop in the sequel, the tilde symbol for $u$ and $v$. We steadily have the following equation for $v$
    \begin{equation}\label{RadTNVS}
        v_t = v_{rr} + \frac{4}{r}v_r + \frac{3}{2}r v v_r + \frac{5}{2}v^2
    \end{equation}
    for $(r,t)\in (0,1)\times (-1,0)$. Let us introduce the function
    \[ v^{(1)}(r,t) = r^{1+2/3}v(r,t) \]
    We have the following equation for $v^{(1)}$
    \begin{equation}\label{Pt2-E3}
        v^{(1)}_t - v^{(1)}_{rr} - (\frac{4}{3 r} + \frac{3}{2 r^{2/3}}v^{(1)})v^{(1)}_r + \frac{20}{9 r^2} v^{(1)} = 0,
    \end{equation}
    for $(r,t)\in (0,1)\times (-1,0)$. Let $a\in (0,1/2)$ and $\varepsilon \in (0,1/2)$. Our goal now is to apply a weak maximum principle to \eqref{Pt2-E3} in $(\varepsilon,1/2)\times (-1/4,-a^2)$; indeed, notice that $v$ is smooth in the closure of $(\varepsilon,1/2)\times (-1/4,-a^2)$. We have that 
    \begin{multline*}
        \max_{(r,t)\in [\varepsilon,1/2]\times [-1/4,-a^2]} |v^{(1)}(r,t)| = \max\left\{ \max_{\varepsilon\leq r\leq 1/2}|v^{(1)}(r,-\frac{1}{4})|,\max_{-1/4\leq t \leq -a^2}|v^{(1)}(\frac{1}{2},t)|, \right.\\
        \left. \max_{-1/4\leq t \leq -a^2}|v^{(1)}(\varepsilon,t)| \right\}
    \end{multline*}
    But from the second and third point of the properties we enumerated for $\tilde{u}$ above and by noticing that $v^{(1)}(0,t) = 0$ for all $t\in (-1/4,-a^2)$, we deduce that there exists a finite positive constant $C=C(u)$ independent of $a$ such that
    \[ \max_{(r,t)\in [0,1/2]\times [-1/4,-a^2]} |v^{(1)}(r,t)| \leq C, \]
    for all $a\in (0,1/2)$. Consequently, we have that  
    \begin{equation}
        \max_{(r,t)\in [0,1/2]\times [-1/4,0]} |v^{(1)}(r,t)| \leq C,
    \end{equation}
    which concludes the proof.
\end{proof}
\paragraph{Concluding remarks:} We conclude this paper by mentioning a consequence of Theorem~\ref{Thm3.1}. The latter gives us a refinement of the set of singular points of the solution $u$ (this is systematically true as soon as one have a higher integrability of the gradient of $u$). Indeed, we have, thanks to H\"older's inequality, that 
\begin{equation}\label{VII-E48}
    \frac{1}{r}\int_{Q(r)}|\nabla u|^2 dz \leq c(\delta) \left( \frac{1}{r^{1-2 \delta}}\int_{Q(r)}|\nabla u|^{2+\delta} dz \right)^{\frac{2}{2 + \delta}};
\end{equation}
consequently, Theorem~\ref{Thm3.2} guarantees that there exists a constant $\epsilon_1 = \epsilon_1(\delta)>0$ such that if 
\[ \sup_{0<r<1}\frac{1}{r^{1-2 \delta}}\int_{Q(r)}|\nabla u|^{2+\delta} dz < \epsilon_1, \]
then $z=0$ is a regular point. From this, we derive in a similar way as done in \cite{Caff82} that the $(1-2\delta)$-dimensional parabolic Hausdorff measure of the set of singular points of suitable weak solutions $u$ such that $u\in L_{\infty}(-1,0;BMO^{-1}(B))$ in $Q$ is null. If $\delta \geq 1/2$ then it is easy to see from \eqref{VII-E48} that $u$ is regular in $\overline{Q(1/2)}$. Unfortunately, we are not able to prove or disprove the same result for $\delta < 1/2 $ at the moment.
\paragraph{Acknowledgement}
This work was supported by the Engineering and Physical Sciences Research Council [EP/L015811/1]. The author would like to thank Gregory Seregin for the insightful discussions during the completion of this paper. 

\newpage

\bibliography{PaperI.bib}
\bibliographystyle{plain}
\end{document}